\documentclass[11pt, twoside, leqno]{amsart}  
\usepackage{soul}
\usepackage{lipsum}
\usepackage{amsfonts}
\usepackage{graphicx}
\usepackage{epstopdf}
\usepackage{calligra}
\usepackage{amsfonts,amsmath,amsthm,amssymb}
\usepackage{mathtools}
\usepackage{hyperref}
\usepackage{autonum}
\usepackage{hhline}
\usepackage{array}
\usepackage{diagbox}
\usepackage{tcolorbox}
\usepackage{mdframed}
\usepackage{multicol}
\usepackage{graphicx}
\usepackage{subcaption}
\usepackage{moreverb}
\usepackage{bbm}
\usepackage[margin=1.34in]{geometry}
\usepackage{todonotes}
\usepackage{scalerel,amssymb}
\allowdisplaybreaks
\usepackage{mathrsfs}  
\usepackage{lineno}
\usepackage{todonotes}
\usepackage{tikz}
\usepackage{pgfplots}
\usetikzlibrary{arrows.meta}
\usepackage[numbers,sort&compress]{natbib}
\definecolor{mygreen}{HTML}{43a047}
\usepackage{subcaption}
\usepackage{doi}
\usepackage{alphalph}
\usepackage{booktabs}
\usepackage{makecell}
\usepackage{adjustbox}
\usepackage{appendix}
\usepackage{amsaddr}
\usepackage{enumitem}
\usepackage{array}
\newcolumntype{H}{>{\setbox0=\hbox\bgroup}c<{\egroup}@{}}

\def\calUW{\mathcal{U}^{\textup{W}}}
\def\calUBK{\mathcal{U}^{\textup{KB}}}
\def\calN{\mathcal{N}}
\def\calB{\mathcal{B}}
\def\calBW{\mathcal{B}^{\textup{W}}}
\def\calT{\mathcal{T}}
\def\calF{\mathcal{F}}

\def\ulaaa{\underline{\mathfrak{a}}}
\def\olaaa{\overline{\mathfrak{a}}}

\def\Clin{C_{\textup{lin}}}
\newcommand{\Om}{\Omega}

\newcommand{\Dt}{\textup{D}_t}

\def\aaa{\mathfrak{a}}
\def\bbb{\mathfrak{b}}

\def \xin{\xi^{(n)}}
\def \xit{\xi^{(n)}_t}
\def \xitt{\xi^{(n)}_{tt}}
\def \xittt{\xi^{(n)}_{ttt}}
\def \bxin{\boldsymbol{\xi}}
\def \bxit{\boldsymbol{\xi_t}}
\def \bxitt{\boldsymbol{\xi_{tt}}}
\def \bxittt{\boldsymbol{\xi_{ttt}}}
\def \ut{u_t}
\def \utt{u_{tt}}
\def \uttt{u_{ttt}}
\def \psit{\psi_t}
\def \psitt{\psi_{tt}}

\def \un{u^{(n)}}
\def \unt{u^{(n)}_t}
\def \untt{u^{(n)}_{tt}}
\def \unttt{u^{(n)}_{ttt}}
\def \taua {\tau^a}

\newcommand{\Dal}{{\textup{D}}_t^\alpha}
\newcommand{\Doal}{{\textup{D}}_t^{1-\alpha}}
\newcommand{\bmu}{\boldsymbol{\mu}}
\newcommand{\bxi}{\boldsymbol{\xi}}
\newcommand{\dt}{\, \textup{d} t}
\newcommand{\ds}{\, \textup{d} s }
\newcommand{\dx}{\, \textup{d} x}

\newcommand{\dxs}{\, \textup{d}x\textup{d}s}
\newcommand{\dxt}{\, \textup{d}x\textup{d}t}
\newcommand{\intTO}{\int_0^T \int_{\Omega}}
\newcommand{\inttO}{\int_0^t \int_{\Omega}}
\newcommand{\intt}{\int_0^t}

\newcommand{\intO}{\int_{\Omega}}


\newcommand{\R}{\mathbb{R}} 
\newcommand{\N}{\mathbb{N}} 
\newcommand{\Ltwo}{L^2(\Omega)}

\newcommand{\Hnn}{H^{-1}}
\newcommand{\Hneg}{H^{-1}(\Omega)}
\newcommand{\Honezero}{H_0^1(\Omega)}
\newcommand{\Hthree}{H^3(\Omega)}
\newcommand{\Honetwo}{{H^2(\Omega)\cap H_0^1(\Omega)}}
\newcommand{\Honethree}{H^3_\diamondsuit(\Omega)}


\newtheorem{theorem}{Theorem}
\newtheorem{lemma}{Lemma}
\newtheorem{proposition}{Proposition}

\newtheorem*{assumption*}{Assumptions}

\newtheorem{remark}{Remark}
\numberwithin{lemma}{section}
\numberwithin{proposition}{section}
\numberwithin{theorem}{section}
\numberwithin{equation}{section}
\newcommand{\bfq}{\boldsymbol{q}}
\newcommand{\bfmu}{\boldsymbol{\mu}}
\newcommand{\bfxi}{\boldsymbol{\xi}}

\newcommand{\frakK}{\mathfrak{K}}
\newcommand{\tfrakK}{\tilde{\mathfrak{K}}}
\newcommand{\frakKone}{\mathfrak{K}}
\newcommand{\frakKtwo}{\mathfrak{K}_2}
\newcommand{\tfrakKone}{\tilde{\mathfrak{K}}_1}
\newcommand\Lconv{\ast}

\newcommand{\rt}{\tau_\theta}
\newcommand{\ttb}{\tau_\theta^b}


\definecolor{grey}{rgb}{0.5,0.5,0.5}
 
\usepackage{stackengine,scalerel}


\definecolor{darkgreen}{rgb}{0,0.5,0}

\makeatletter
\@namedef{subjclassname@2020}{\textup{2020} Mathematics Subject Classification}
\makeatother
\makeatletter
\newcommand{\leqnomode}{\tagsleft@true}
\newcommand{\reqnomode}{\tagsleft@false}
\makeatother  
\newcommand{\revise}[1]{\textcolor{black}{#1}}
\title[Nonlinear acoustic equations of fractional higher order]{Nonlinear acoustic equations of fractional higher order at the singular limit}
\subjclass[2020]{35L75, 35B25}
\keywords{fractional derivatives, Jordan--Moore--Gibson--Thompson equation, singular limits, nonlinear acoustics}
 \author[V.\ Nikoli\'{c}]{\vspace*{-4mm}\small Vanja Nikoli\'{c}}
\address{ 
	Department of Mathematics, 
	Radboud University   \\ 
	Heyendaalseweg 135,
	6525 AJ Nijmegen, The Netherlands}
\email{vanja.nikolic@ru.nl}
\begin{document}
\vspace*{8mm}
\begin{abstract}
When high-frequency sound waves travel through media with anomalous diffusion, such as biological tissues, their motion can be described by nonlinear wave equations of fractional higher order. These can be understood as nonlocal generalizations of the Jordan--Moore--Gibson--Thompson equations in nonlinear acoustics. In this work, we relate them to the classical second-order acoustic equations and, in this sense, justify them as their approximations for small relaxation times. To this end, we perform the singular limit analysis for a class of corresponding nonlocal wave models and determine their behavior as the relaxation time tends to zero. We show that, depending on the nonlinearities and assumptions on the data, these models can be seen as approximations of the Westervelt, Blackstock, or Kuznetsov wave equations. The analysis rests upon the uniform bounds for the solutions of the equations with fractional higher-order derivatives, obtained through a testing procedure tailored to the coercivity property of the involved (weakly) singular memory kernels.
\end{abstract}
\vspace*{-8mm}
\maketitle           
\section{Introduction} \label{Sec:Introduction}
Ultrasound propagation through complex tissue-like media is known to follow more involved governing laws than in fluids~\cite{holm2019waves, prieur2011nonlinear, prieur2012more}. This evolution is nonlocal in nature, as the past may influence the present state.  At high frequencies or intensities, nonlinear effects come additionally into play. These modeling considerations are especially relevant in medical applications of ultrasonic waves in imaging~\cite{szabo2004diagnostic} and cancer therapy~\cite{kennedy2005high}. \\
\indent Motivated by the accurate description of nonlinear and nonlocal acoustic phenomena, we investigate a class of equations of the following type: 
\begin{equation}  \label{general_wave_equation}
	\begin{aligned}
\begin{multlined}[t]
	\taua \frakKone *\uttt+\aaa \utt
-c^2 \bbb \Delta u -\delta \Delta \ut -\taua c^2 \frakK*\Delta \ut   +\calN=f,
\end{multlined}
	\end{aligned}
\end{equation}
where $*$ denotes the Laplace convolution operator in time. These involve a kernel $\frakK$ of fractional type and general nonlinearities in the form of
\[
\aaa=\aaa(u, u_t), \quad \bbb=\bbb(u_t), \quad \calN=\calN(\ut, \nabla u, \nabla \ut).
\]
They were derived in \cite{kaltenbacher2022time}, under the name fractional Jordan--Moore--Gibson--Thompson (fJMGT) equations, with $\frakK$ being the Abel kernel:
\begin{equation} \label{Abel_kernel}
\frakK(t)= \frac{1}{\Gamma(1-\alpha)}t^{-\alpha}, \quad \alpha \in (1/2, 1) 
\end{equation}
 and the nonlocal terms the Caputo--Djrbashian fractional derivatives; here we will impose a set of assumptions on the kernel that generalizes \eqref{Abel_kernel}. To arrive at \eqref{general_wave_equation}, the Fourier heat flux law is replaced by a nonlocal Maxwell--Cattaneo law within the system of constitutive equations of sound propagation. The latter involves the relaxation time $\tau>0$. This change is responsible for the higher order in the principal term of the resulting acoustic equation; we will come back to this discussion in Section~\ref{Sec:Modeling} with further modeling details. The power $a>0$ in \eqref{general_wave_equation} is kernel-dependent and there to ensure the dimensional homogeneity. For kernels \eqref{Abel_kernel} corresponding to fractional derivatives of Caputo--Djrbashian type, it is equal to the fractional order of differentiation $\alpha$. \\
\indent Naturally, the question arises to which extent these equations can represent reality. As the relaxation time $\tau$ is relatively small, one might expect a certain continuity between the solutions of \eqref{general_wave_equation} and those of the limiting models as $\tau \searrow 0$. Formally setting $\tau$ to zero in \eqref{general_wave_equation} yields the classical strongly damped Kuznetsov,~\cite{kuznetsov1971equations} Blackstock,~\cite{blackstock1963approximate} or Westervelt~\cite{westervelt1963parametric} wave equations in nonlinear acoustics:
\begin{equation} \label{limiting_eq}
\begin{aligned}
\begin{multlined}[t]
\aaa(u, u_t) \utt
-c^2 \bbb(u_t) \Delta u
-  \delta  \Delta \ut +\calN(\ut, \nabla u, \nabla \ut)=f,
\end{multlined}
\end{aligned}
\end{equation}
depending on the choice of the nonlinearities (that is, functions $\aaa$, $\bbb$, and $\calN$). These local-in-time quasilinear wave models have received a lot of attention in the analysis over the recent years and are as a consequence by now mathematically well-understood; we refer to~\cite{fritz2018well, kaltenbacher2009global, meyer2011optimal, tani2017mathematical} for a selection of relevant results on their (local and global) well-posedness analysis. An overview of further related mathematical research in nonlinear acoustics can be found in the review paper~\cite{kaltenbacher2015mathematics}. \\
\indent The goal of this work is to relate the nonlocal and local concepts of describing the nonlinear sound waves by establishing the behavior of solutions to \eqref{general_wave_equation} as the relaxation time tends to zero. Interestingly, equation \eqref{general_wave_equation} should be considered with three initial conditions, whereas the limiting problem is supplemented by two. Thus the nature of the problem changes in the vanishing limit of the relaxation time. As we will see, solutions to \eqref{general_wave_equation} can indeed be seen as approximations of solutions to \eqref{limiting_eq} as $\tau \searrow 0$, provided that the kernel and data satisfy suitable assumptions.\\
\indent To unlock the singular limit analysis, we must first obtain $\tau$-uniform bounds for the solutions of \eqref{general_wave_equation}. The difficulty in deriving these lies in the interplay between the nonlocal and nonlinear aspects of the problem. When developing an energy method for \eqref{general_eq}, the available test functions are restricted by the coercivity one can expect from the memory kernel. At the same time, they should still work well enough to tackle the involved nonlinearities. For example, to ensure that the limiting equation \eqref{limiting_eq} does not degenerate (meaning that it is still a wave equation), we need to guarantee that the function $\aaa=\aaa(u, \ut)$ within the leading term stays uniformly positive. This issue translates to needing $L^\infty(\Omega)$ bounds on the solution $u$ or its time derivative $\ut$, which is in the analysis of nonlinear acoustic equations commonly resolved by having sufficiently smooth and small solutions and using an embedding, such as $H^2(\Omega)\hookrightarrow L^\infty(\Omega)$; see, for example,~\cite{kaltenbacher2009global, kaltenbacher2011wellposedness}.  Obtaining these $H^2(\Omega)$ bounds on $u$ or $\ut$, uniformly in $\tau$, puts an additional strain on already delicate energy arguments inherently needed for fractional-type wave equations.  \\
\indent The key idea of the present work is to see nonlocal equation \eqref{general_eq} in the following form:
\begin{equation} 
\begin{aligned}
\begin{multlined}[t]
\taua \frakKone *(\utt-c^2 \Delta u)_t+\aaa \utt
-c^2 \bbb \Delta u -\delta \Delta \ut  +\calN=f.
\end{multlined}
\end{aligned}
\end{equation}
Grouping the nonlocal terms like this suggests to use \[y(t)=\utt-c^2 \Delta u\] as a test function for the uniform analysis in $\tau$. Such an analysis is tailored to the coercivity property one can expect from fractional-type kernels:
 \begin{equation}
	\int_0^{T} \left(\frakKone* y_t \right)(t) y(t)\dt \geq - C_{\frakKone} |y(0)|^2
\end{equation}
and forms the core of our arguments. This testing is first employed on a suitable linearization of \eqref{general_wave_equation} and then combined with a fixed-point strategy, under the assumption of sufficiently small initial data. To this end, we will distinguish between two types of nonlinearities that we term Westervelt- and Kuznetsov--Blackstock-type here, as they will require different smoothness assumptions on the initial data. \\
\indent In the case that $\frakKone$ is the Dirac delta distribution $\delta_0$, \eqref{general_wave_equation} with $a=1$ reduces to the Jordan--Moore--Gibson--Thompson (JMGT) equation~\cite{jordan2014second, moore1960propagation} with the leading term of third order:
\begin{equation}  \label{JMGT}
\begin{aligned}
\begin{multlined}[t]
\tau (\utt-c^2 \Delta u)_t+\aaa \utt
-c^2 \bbb \Delta u -\delta \Delta \ut    +\calN=f.
\end{multlined}
\end{aligned}
\end{equation}  
\revise{This equation and its linearization, referred to as the Moore--Gibson--Thompson (MGT) equation, have also received plenty of attention in the recent mathematical literature; we refer the reader to~\cite{bongarti2021boundary, bucci2019regularity, chen2019nonexistence, dell2017moore, kaltenbacher2011wellposedness, pellicer2019wellposedness, racke2020global, said2022global} for some of the relevant results}. \revise{Significant progress has also been made in the investigations of global solvability and asymptotic behavior of the (J)MGT equations with additional memory terms; see~\cite{alves2018moore, dell2016moore, kaltenbacher2021inviscid, lasiecka2017global, lasiecka2015moore, lasiecka2016moore, liu2020new} and the references contained therein}. In close relation to the present work, we point out the singular limit analysis of \eqref{JMGT} for the vanishing relaxation time in~\cite{bongarti2020vanishing, kaltenbacher2019jordan, kaltenbacher2020vanishing}, in somewhat simplified settings compared to ours concerning the involved nonlinearities. \revise{Our analysis below also covers the case $\frakK=\delta_0$; as a consequence, we shed new light on the limiting behavior of solutions to this nonlinear third-order equation, in particular, in the presence of the Kuznetsov--Blackstock nonlinearities. The strong convergence analysis for the local JMGT equation with Westervelt nonlinearities can be found in~\cite{bongarti2020vanishing}.} We also point out the concurrent works in~\cite{meliani2023} and~\cite{kaltenbacher2023vanishing} which consider multi-term nonlocal acoustic equations of higher order with two memory kernels, under stronger assumptions on the leading kernel than here. More precisely,~\cite{meliani2023} considers linear equations; \cite{kaltenbacher2023vanishing} allows only for Westervelt-type nonlinearities under stronger assumptions on data than here and without establishing convergence rates in the zero $\tau$ limit.  \\
\indent The rest of the exposition is organized in the following manner. In Section~\ref{Sec:Modeling}, we discuss in more detail the nonlinear acoustic modeling that leads to the equations of fractional higher order studied in this work. In Section~\ref{Sec:Preliminaries}, we specify the assumptions on the kernel and give concrete examples. We then organize the analysis based on the type of nonlinearities in the equation. Section~\ref{Sec:Est_WestType} features the uniform well-posedness analysis in $\tau$ with Westervelt-type nonlinearities, while Section~\ref{Sec:LimWest} establishes their weak and strong limiting behavior. The main results of these sections are Theorems~\ref{Thm:WellP_West} and~\ref{Thm:StrongLim_West}. We then move on to equations with the Kuznetsov--Blackstock nonlinearities. Their uniform analysis in $\tau$ is contained in Section~\ref{Sec:EstBK}, while we investigate their limiting behavior in Section~\ref{Sec:Lim_BK}. The main results of this part are given in Theorems~\ref{Thm:WellP_GFE_BlckstockKuznetov} and~\ref{Thm:StrongLim_Blackstock}. 
\section{Models of ultrasound waves in complex media} \label{Sec:Modeling}
Classical second-order models of nonlinear sound propagation through thermoviscous fluids are based on employing the Fourier flux law within the system of governing equations:
\begin{equation} \label{Fourier_law}
\begin{aligned}
\bfq(t)= -  \kappa \nabla \theta;
\end{aligned}
\end{equation}
see, for example,~\cite{blackstock1963approximate, crighton1979model}. In \eqref{Fourier_law}, $\bfq$ is the heat flux, $\theta$ the absolute temperature, and $\kappa$  the thermal conductivity. A popular alternative to \eqref{Fourier_law} is the Maxwell--Cattaneo law~\cite{cattaneo1958forme}:
\begin{equation} \label{MC_law}
\begin{aligned}
\bfq+\tau \bfq_t= - \kappa \nabla \theta,
\end{aligned}
\end{equation}
which introduces a lag between $\bfq$ and $- \kappa \nabla \theta$ via the thermal relaxation time $\tau>0$, thereby avoiding the infinite speed of propagation. In an attempt to better characterize heat transfer in complex non-homogeneous materials, fractional interpolations of these two laws have been introduced in the literature. In particular,~\cite{compte1997generalized} discusses (among others) the following time-fractional version of the Maxwell--Cattaneo law:
\begin{equation}\label{CM_law}
(1+\tau^\alpha \Dal)\boldsymbol{q} =-\kappa \nabla \theta; \hphantom{{\rt^{1-\alpha}}\Doal }
\end{equation}
see also~\cite{zhang2014time} for a further numerical study involving \eqref{CM_law}. In~\cite{compte1997generalized}, $\Dal$ is understood as the Riemann--Liouville derivative of order $\alpha$, however in the present work we consider it to be 
the Caputo--Djrbashian fractional derivative. We may do so at this point as it is assumed that $\bfq(0)=0$. Given $w \in W^{1,1}(0,T)$, the Caputo--Djrbashian derivative is defined as
\[
\Dt^{\alpha}w(t)=\frac{1}{\Gamma(1-\alpha)}\int_0^t (t-s)^{-\alpha} w_t(s) \, \textup{d} s, \quad \alpha \in(0,1),
\]
where $\Gamma(\cdot)$ denotes the Gamma function; see,~\cite[Ch.\ 1]{kubica2020time} and~\cite[Ch.\ 2.4.1]{podlubny1998fractional}. Having these (fractional) flux laws in mind, we consider below acoustic equations based on the unified law:
\begin{equation} \label{MC_flux_law_nonlocal}
	\bfq+ \taua \frakK*\bfq_t= -  \kappa \nabla \theta, 
\end{equation}
where $*$ denotes the Laplace convolution operator
\[
(\frakK* y)(t):=  \int_0^t	\frakK(t-s)y(s)\ds,
\]
with the kernel $\frakKone$ assumed to be independent of $\tau$. The power $a>0$ is kernel-dependent (but fixed) and there to ensure dimensional homogeneity of the flux law. We will impose conditions on $\frakKone$ in Section~\ref{Sec:Preliminaries} that will allow us to cover both \eqref{MC_law} and \eqref{CM_law}, and in the limit $\tau \searrow 0$ also \eqref{Fourier_law}. \\
\indent In particular, \revise{\eqref{MC_law} follows by setting $\frakK=\delta_0$ and $a=1$ in \eqref{MC_flux_law_nonlocal}}. Time-fractional law \eqref{CM_law} follows by choosing \eqref{Abel_kernel} and setting $a=\alpha$.\\ 
\indent The derivation of nonlinear acoustic equations based on the fractional law in \eqref{MC_flux_law_nonlocal} can be found in~\cite[Sec.\ 2]{kaltenbacher2022time}. The resulting equation was named the fractional Jordan--Moore--Gibson--Thompson (fJMGT) equation with sub-types depending on the involved nonlinearities; see~\cite[Eqs.\ (2.6) and (2.7)]{kaltenbacher2022time}. By retracing the steps of that derivation only now with the generalized heat flux law in \eqref{MC_flux_law_nonlocal} instead of \eqref{CM_law}, the following nonlinear wave equation for the acoustic velocity potential $\psi$ is obtained:
\begin{equation}\label{general_eq}
	\begin{aligned}
\begin{multlined}[t]\taua \frakKone*\psi_{ttt}+\aaa(\psi_t)\psi_{tt}
	-c^2\bbb(\psi_t) \Delta \psi -\delta \Delta \psit-\taua c^2 \Delta \frakKone*\psi_t \\ \hspace*{7cm}+ \calN(\nabla \psi, \nabla \psi_t)=0.
	\end{multlined}
	\end{aligned}
\end{equation}
Above, \[\calN(\nabla \psi, \nabla \psi_t)=\tilde{\ell} \partial_t(|\nabla \psi|)^2=2 \tilde{\ell} \nabla \psi \cdot \nabla \psi_t\]
and either
\begin{equation} \label{Blackstock_nonlinearity}
	\aaa = 1, \quad \bbb(\psi_t)=1-2\tilde{k}\psi_t, 
\end{equation}
or \vspace*{-1mm}
\begin{equation} \label{Kuznetsov_nonlinearity}
	\aaa(\psi_t) = 1+2\tilde{k}\psi_t, \quad \bbb=1. 
\end{equation}
Here $c>0$ denotes the speed of sound and the medium parameter $\delta>0$ is referred to as the sound diffusivity. The nonlinearity coefficients $\tilde{k}$ and $\tilde{\ell}$ are medium-dependent. Equation \eqref{general_eq} can be understood as a generalization of~\cite[Eqs.\ (2.6) and (2.7)]{kaltenbacher2022time}, where $\frakK$ here replaces the fractional-derivative kernel. \\
\indent Formally setting $\tau=0$ with nonlinearities \eqref{Blackstock_nonlinearity} yields the damped Blackstock equation~\cite{blackstock1963approximate} in nonlinear acoustics,
and with \eqref{Kuznetsov_nonlinearity} the Kuznetsov equation.~\cite{kuznetsov1971equations} For the Kuznetsov equation, it is common to employ the approximation \[|\nabla \psi|^2 \approx c^{-2}\psi_t^2,\]
when cumulative nonlinear effects dominate the local ones, and in this manner simplify it by the Westervelt equation~\cite{westervelt1963parametric}; see~\cite[Ch.\ 3]{hamilton1998nonlinear} for a discussion on when local effects can be ignored. Using this approximation in \eqref{general_eq} with \eqref{Kuznetsov_nonlinearity} results in
\begin{equation} \label{West_type_potential}
	\begin{aligned}
		\begin{multlined}[t]
\taua \frakKone*\psi_{ttt}+(1+2k \psi_t)\psi_{tt}
-c^2 \Delta \psi -\delta \Delta \psit-\taua c^2 \frakKone*\Delta \psit=0
		\end{multlined}
	\end{aligned}
\end{equation}
with $k=k+c^{-2}\tilde{\ell} $. It is also common to express the Westervelt equation in terms of the acoustic pressure $p$. Assuming $\frakK \in L^1(0,T)$, formally taking the time derivative of \eqref{West_type_potential} and employing the relation $p=\varrho \psi_t$, where $\varrho$ is the mass density, leads to the pressure form
\begin{equation} \label{JMGT_West_nonlocal_pressure}
	\begin{aligned}
		\begin{multlined}[t]
			\taua \frakKone*(p_{tt}-c^2 \Delta p)_t+(1+2 \underline{k} p)p_{tt}
			- c^2\Delta p	- \delta  \Delta p_{t}+ 2 \underline{k} p_t^2= \mathfrak{r}
		\end{multlined}
	\end{aligned}
\end{equation}
with $\underline{k} =k/\varrho$ and the right-hand side
\begin{equation} \label{source_Westervelt_type}
\mathfrak{r}(t) = -\taua \frakKone(t)(p_{tt}(0)- c^2\Delta p(0)).
\end{equation}
\noindent {\emph{\bf Acoustic models under consideration.}} We will tackle the above different forms of acoustic equations in the analysis by unifying them in one abstract model:
\begin{equation}\label{general_eq_u}
	\begin{aligned}
		\taua \frakKone*\uttt+\aaa \utt
	- \delta \Delta \ut	-c^2\bbb \Delta u -\taua c^2 \frakKone* \Delta \ut
		+ \calN =f,
	\end{aligned}
\end{equation}
and focus on the two distinct nonlinearity types that require different regularity assumptions on the data: \vspace*{1mm}
\begin{itemize}
	\item  Westervelt-type with 
	\[\aaa=\aaa(u)=1+2k_1 u, \quad \bbb=1, \quad \calN=\calN(\ut) = 2k_1 \ut^2;\]
\item  Kuznetsov--Blackstock-type with
	\begin{equation}
	\begin{aligned}
	&\aaa=\aaa(\ut)=1+2k_1 u_t, \quad \bbb=\bbb(\ut)=1-2k_2 \ut, \\[1mm]
	 &\calN=\calN(\nabla u, \nabla \ut) = 2k_3 \nabla u \cdot \nabla \ut,
	\end{aligned}
\end{equation}
\end{itemize}
where we assume $k_{1,2, 3}$ to be real numbers. The Westervelt-type equation incorporates the nonlinearities that arise in \eqref{JMGT_West_nonlocal_pressure}, in which case $u$ denotes the acoustic pressure. The Kuznetsov--Blackstock equation covers \eqref{Blackstock_nonlinearity}, \eqref{Kuznetsov_nonlinearity}, and \eqref{West_type_potential}, where $u$ is the acoustic velocity potential.\\
\indent In all cases, the well-posedness analysis needs to ensure that the leading term $\aaa$ in the limiting equations does not degenerate. As already mentioned, this translates to needing an $L^\infty(\Omega)$ bound; in the case of Westervelt-type equations on $u$, and in the case of Kuznetsov--Blackstock nonlinearities on $\ut$. In the latter case, we also need to be able to control the quadratic gradient term; these two tasks combined lead to needing higher regularity of the solution and in turn higher-order energy arguments compared to the Westervelt case. 

\section{Preliminaries and assumptions on the memory kernel} \label{Sec:Preliminaries}
Throughout this work we assume $\Omega$ to be a smooth bounded domain in $\R^d$, where $d \in \{1,2,3\}$. For the results in Sections~\ref{Sec:Est_WestType} and~\ref{Sec:LimWest} (Westervelt-type nonlinearities) to hold, it is sufficient that $\Omega$ is either a $C^{1,1}$-regular or Lipschitz-regular and convex domain. In Sections~\ref{Sec:EstBK} and~\ref{Sec:Lim_BK} (Kuznetsov--Blackstock-type nonlinearities), $\Omega$ should be a $C^{2,1}$ regular domain. $T>0$ denotes the final propagation time which is given and fixed. \\[2mm]
\noindent \emph{Notation.} 	Below we often use $x\lesssim y$ to denote $x\leq C\, y$ with a constant $C>0$ that does not depend on the thermal relaxation time $\tau$.  We use $\lesssim_T$ to emphasize that the hidden constant $C=C(T)$ tends to $\infty$ as $T \rightarrow \infty$ (often after applying Gronwall's inequality or a Sobolev embedding in time). \\
\indent We frequently omit the spatial and temporal domain when writing norms; for example, $\|\cdot\|_{L^p(L^q)}$ denotes the norm on the Bochner space $L^p(0,T; L^q(\Omega))$. \\[2mm]
\noindent \emph{An auxiliary theoretical result.} Before proceeding, we recall a compactness result from~\cite{meliani2023} which will be helpful in the well-posedness proofs of linearized problems based on the Faedo--Galerkin procedure.
\begin{lemma}[See~\cite{meliani2023}] \label{Lemma:Caputo_seq_compact}
	Let the kernel $\frakK \in L^1(0,T)$ be such that there exists $\tfrakK \in L^{2}(0,T)$ for which $\tfrakK \Lconv \frakK=1$. Consider the space
	\begin{equation} \label{def_Xfp}
	X_\frakK^2(0,T) = \{y \in L^2(0,T) \ |\ \frakK \Lconv y' \in L^2(0,T)\},
	\end{equation}
	with the norm
	\[ \|\cdot\|_{X_\frakK^2} = \big(\|y\|_{L^2}^2 + \|\frakK\Lconv y'\|_{L^2}^2 \big)^{1/2}.\] 
	The following statements hold true:
	\begin{itemize}
		\item 	The space $X_\frakK^2(0,T)$ is reflexive and separable;
		\item The unit ball \revise{$B_{X_\frakK^2}$} in $X_\frakK^2(0,T)$  is weakly sequentially compact;
		\item The space $X_\frakK^2$ continuously embeds into $C[0,T]$.
	\end{itemize}
\end{lemma}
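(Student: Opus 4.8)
My plan is to realize $X_\frakK^2(0,T)$ as a Hilbert space that is isometrically isomorphic to a closed subspace of $L^2(0,T)\times L^2(0,T)$, deduce reflexivity, separability, and weak sequential compactness of the unit ball from soft functional analysis, and then extract the continuous embedding into $C[0,T]$ from the Sonine-type identity $\tfrakK\Lconv\frakK=1$. First I would observe that $\|\cdot\|_{X_\frakK^2}$ is induced by the inner product $(y,z)\mapsto (y,z)_{L^2}+(\frakK\Lconv y',\frakK\Lconv z')_{L^2}$, so that $X_\frakK^2(0,T)$ is a pre-Hilbert space and $\iota\colon y\mapsto (y,\frakK\Lconv y')$ is a linear isometry into $L^2(0,T)\times L^2(0,T)$. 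The decisive point is that the range of $\iota$ is closed, equivalently that $y\mapsto \frakK\Lconv y'$ has closed graph: given $y_n\to y$ and $\frakK\Lconv y_n'\to g$ in $L^2(0,T)$, I would test against $\phi\in C_c^\infty(0,T)$ and transfer the convolution kernel and the derivative onto $\phi$ via the adjoint of the Laplace convolution, turning $\langle \frakK\Lconv y_n',\phi\rangle$ into an expression linear in $y_n$; this converges since $y_n\to y$ in $L^2(0,T)$ and identifies $g=\frakK\Lconv y'$ in the distributional sense, whence $y\in X_\frakK^2(0,T)$ and $\iota(y)=(y,g)$. Being isometric to a closed subspace of the separable Hilbert space $L^2(0,T)\times L^2(0,T)$, the space $X_\frakK^2(0,T)$ is itself a separable Hilbert space, in particular complete and reflexive; the weak sequential compactness of $B_{X_\frakK^2}$ is then immediate, since a Banach space is reflexive precisely when its closed unit ball is weakly compact (Kakutani), and weak compactness of the unit ball coincides with weak sequential compactness by the Eberlein--\v{S}mulian theorem.

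For the embedding into $C[0,T]$ I would exploit the Sonine pairing $\tfrakK\Lconv\frakK=1$. Convolving the defining relation $g:=\frakK\Lconv y'\in L^2(0,T)$ with $\tfrakK$ and using associativity of the Laplace convolution gives $\tfrakK\Lconv g=(\tfrakK\Lconv\frakK)\Lconv y'=1\Lconv y'$, and since $1\Lconv y'$ is the antiderivative of $y'$ this yields $y=y(0)+\tfrakK\Lconv g$ almost everywhere. Because $\tfrakK\in L^2(0,T)$ and $g\in L^2(0,T)$, the Cauchy--Schwarz inequality shows $\tfrakK\Lconv g\in C[0,T]$ with $(\tfrakK\Lconv g)(0)=0$ and $\|\tfrakK\Lconv g\|_{C[0,T]}\le\|\tfrakK\|_{L^2}\|g\|_{L^2}$, so $y$ admits a continuous representative. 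To render $y(0)$ meaningful and to bound it, I would note that $y-\tfrakK\Lconv g$ is constant, whence $|y(0)|\sqrt{T}=\|y-\tfrakK\Lconv g\|_{L^2}\le\|y\|_{L^2}+\sqrt{T}\,\|\tfrakK\|_{L^2}\|g\|_{L^2}$; combining the two estimates gives $\|y\|_{C[0,T]}\lesssim\|y\|_{L^2}+\|\frakK\Lconv y'\|_{L^2}$, which is exactly the claimed continuous embedding.

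The main obstacle is the rigorous handling of convolutions involving the weakly singular kernels $\frakK,\tfrakK$ together with the distributional derivative $y'$: one must justify the associativity $\tfrakK\Lconv(\frakK\Lconv y')=(\tfrakK\Lconv\frakK)\Lconv y'$ and the identity $1\Lconv y'=y-y(0)$ without presupposing the very regularity one is trying to establish, and likewise the closedness of the graph of $y\mapsto\frakK\Lconv y'$. I expect to settle these points through a density/approximation argument (mollifying $\frakK$ and $y$ and passing to the limit) while relying throughout on the integrability $\tfrakK\in L^2(0,T)$, which keeps every intermediate convolution within the scale of $L^2(0,T)$ and $C[0,T]$ functions and thereby makes all the limiting manipulations legitimate.
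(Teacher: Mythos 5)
The paper offers no proof of this lemma to compare against: it is imported directly from~\cite{meliani2023}, so your proposal has to be judged on its own merits. Its architecture is sound and is the natural one: realize $X_\frakK^2(0,T)$ as a space isometric, via $y\mapsto (y,\frakK\Lconv y')$, to a subspace of $L^2(0,T)\times L^2(0,T)$; deduce separability and reflexivity once that subspace is known to be closed; obtain weak sequential compactness of the unit ball from Kakutani and Eberlein--\v{S}mulian; and extract the $C[0,T]$ embedding from the Sonine identity through the representation $y=y(0)+\tfrakK\Lconv(\frakK\Lconv y')$. Your embedding estimate (continuity of $\tfrakK\Lconv g$ for $\tfrakK,g\in L^2(0,T)$, the bound $\|\tfrakK\Lconv g\|_{C[0,T]}\le\|\tfrakK\|_{L^2}\|g\|_{L^2}$, and the recovery of $|y(0)|$ from the constancy of $y-\tfrakK\Lconv g$) is correct.

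There is, however, a genuine flaw in the closed-graph step as you describe it, and since completeness of $X_\frakK^2$ rests on exactly this step, it affects all three assertions. Transferring the convolution and the derivative onto $\phi\in C_c^\infty(0,T)$ does not yield an expression linear in $y_n$ alone: with $\tilde\phi(s)=\int_s^T\frakK(t-s)\phi(t)\,\textup{d}t$, integration by parts gives
\begin{equation}
\int_0^T (\frakK\Lconv y_n')(t)\,\phi(t)\,\textup{d}t \;=\; -\,y_n(0)\,\tilde\phi(0)\;-\;\int_0^T y_n(s)\,\tilde\phi'(s)\,\textup{d}s,
\end{equation}
and the boundary term survives because $\tilde\phi$ does not vanish at $s=0$. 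Discarding it would identify the limit $g$ with the Riemann--Liouville-type condition $(\frakK\Lconv y)'\in L^2(0,T)$, which defines a strictly larger derivative (hence a different space) precisely in the regime this paper cares about: for the Abel kernel with $\alpha\in(1/2,1)$ one has $\frakK\notin L^2(0,T)$, so a nonzero constant $y$ satisfies $\frakK\Lconv y'=0\in L^2(0,T)$ while $(\frakK\Lconv y)'=y(0)\frakK\notin L^2(0,T)$. Since the trace $y_n(0)$ is not controlled by $L^2$ convergence of $y_n$, the claim ``this converges since $y_n\to y$ in $L^2$'' fails as stated. The repair lies inside your own proposal but requires reordering: prove the representation $y=y(0)+\tfrakK\Lconv g$, $g=\frakK\Lconv y'$, \emph{first}; then, for a graph sequence, $g_n\to g$ in $L^2$ gives $\tfrakK\Lconv g_n\to\tfrakK\Lconv g$ uniformly, hence the constants $y_n(0)=y_n-\tfrakK\Lconv g_n$ converge in $L^2(0,T)$ to a constant $c$, so $y=c+\tfrakK\Lconv g$; and since $\tfrakK\Lconv g$ vanishes at $t=0$, one may legitimately compute $\frakK\Lconv(\tfrakK\Lconv g)'=(1\Lconv g)'=g$, concluding $y\in X_\frakK^2(0,T)$ with $\frakK\Lconv y'=g$. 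Equivalently, note that a graph sequence is Cauchy in $\|\cdot\|_{X_\frakK^2}$, so the already-proved embedding makes it uniformly Cauchy and the traces converge. With this reordering, together with the density argument you already flag for the associativity identities, the proof closes.
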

\noindent \emph{Assumptions on the kernel.} Going forward, we make the following assumptions on the memory kernel.\vspace*{1mm}
\begin{enumerate}[label=(\textbf{$\bf \mathcal{A}_\arabic*$})]
	\item \label{Aone}	 	 $\frakK \in \{\delta_0\} \cup L^1(0,T)$;  \\
	\item \label{Atwo} There exists $\tfrakK \in L^2(0,T)$, such that $\frakK*\tfrakK=1$; \\[1mm]
	\item \label{Athree}  There exists a constant $C_{\frakKone}>0$, independent of $\tau$, such that the following coercivity bound holds: 
	\begin{equation}
	\int_0^{T}  \left(\frakKone* y' \right)(t) \,y(t)\dt \geq - C_{\frakKone} |y(0)|^2\quad  \text{for all} \ y\in X_{\frakKone}^2(0, T), 
	\end{equation} 
	where the space $X_{\frakKone}^2(0,T)$ is defined in \eqref{def_Xfp}.
\end{enumerate}
The Dirac delta distribution $\delta_0$ (which satisfies all three assumptions) is included  so that our analysis covers the integer-order Jordan--Moore--Gibson--Thompson equation as well, although we focus on the nonlocal case $\frakKone \in L^1(0,T)$ in the presentation.\\ 
\indent Regularity assumption \ref{Aone} and coercivity assumption \ref{Athree} are satisfied by the fractional kernel
\begin{equation}\label{frac_kernel}
\frakK(t)= \frac{1}{\Gamma(1-\alpha)}t^{-\alpha}, \quad \alpha \in (0,1).
\end{equation}
The latter follows by~\cite[Lemma B.1]{kaltenbacher2021determining} and a density argument (as it is stated in~\cite{kaltenbacher2021determining} for $y \in W^{1,1}(0,t')$).  For this kernel, assumption \ref{Atwo} on the resolvent $\tfrakK$ being in $L^2(0,T)$ is equivalent to assuming that $\alpha>1/2$; see, for example,~\cite[Ch.\ 6]{jin2021fractional}. Therefore our analysis below covers the fractional Jordan--Moore--Gibson--Thompson equation  introduced in~\cite{kaltenbacher2022time}:
\begin{equation} \label{general_wave_equation_Dt}
\begin{aligned}
\begin{multlined}[t]
\tau^{\alpha} \Dal (\utt-c^2 \Delta u)+\aaa \utt
-c^2 \bbb \Delta u -\delta \Delta \ut    +\calN=f
\end{multlined}
\end{aligned}
\end{equation}
with the fractional order of differentiation $\alpha>1/2$.\\
\indent More generally, by~\cite[Lemma 5.1]{kaltenbacher2022limiting} and a density argument, coercivity assumption \ref{Athree} holds for any kernel that satisfies the following conditions:
\begin{equation}
\begin{aligned}
&\frakKone \in L^1(0,T), \quad  &&(\forall t_0>0)\quad  \frakKone \in W^{1,1}(t_0, T),\\
&\frakKone \geq 0 \ \text{ a.e., } \quad &&(\forall t_0>0) \quad \frakKone'\vert_{[t_0, T]} \leq 0 \ \text{ a.e}.
\end{aligned}
\end{equation}
Thus the analysis in this work holds for all such kernels under the additional condition on their resolvent given in \ref{Atwo}.
\section{Uniform estimates with Westervelt-type nonlinearities} \label{Sec:Est_WestType}
The general strategy in the uniform well-posedness analysis is based on first studying a linearization of the nonlocal equation, and then combining the obtained results with Banach's fixed-point theorem. We take these  two steps in this section. The topic of study here is equation \eqref{JMGT_West_nonlocal_pressure} with Westervelt-type nonlinearities, which we can also rewrite by grouping the nonlinear terms as
\begin{equation}
\taua \frakKone*(\utt-c^2 \Delta u)_t+((1+2 k_1 u )\ut)_t
- c^2\Delta u	- \delta  \Delta \ut= f,
\end{equation}
coupled with initial and boundary data:
$(u, u_t, u_{tt})\vert_{t=0} =(u_0, u_1, u_2)$, $ u_{\vert \partial \Omega}=0$. We introduce a linearization with a variable coefficient,
\begin{equation} \label{GFE_type_eq}
\taua \frakKone *(\utt-c^2\Delta u)_t+ (\aaa(x,t) \ut)_t-c^2\Delta u - \delta \Delta \ut = f,
\end{equation}
where in this section we should understand $\aaa=\aaa(x,t)$ as a placeholder for $1+2k_1 u$. As announced, the main idea in the uniform analysis is to test \eqref{GFE_type_eq} with
 \[y= \utt-c^2\Delta u.\]
An advantage of this combined testing procedure is that we only need the coercivity assumption on $\frakKone$ given in \ref{Athree}. We outline first the main arguments of our energy method, before justifying them rigorously through a Faedo--Galerkin procedure.\\
\indent In terms of assumptions on the variable coefficient $\aaa$, it should be smooth, bounded uniformly in $\tau$, and non-degenerate. More precisely, we assume that 
\begin{equation} \label{smoothness_aaa_West}
	\aaa \in L^\infty(0,T; L^\infty(\Omega)) \cap W^{1,\infty}(0,T; L^4(\Omega))
\end{equation}
and that there exist $\ulaaa$, $\olaaa>0$, independent of $\tau$, such that
\begin{equation} \label{nondeg_aaa}
	\ulaaa < \aaa(x,t) < \olaaa \quad \text{for all} \ (x,t) \in \Omega \times (0,T).
\end{equation} 
Let $f \in L^2(0,T; L^2(\Omega))$. Formally testing the problem with $y(t)=\utt-c^2\Delta u$ and using the coercivity assumption on the kernel gives
\begin{equation}
	\begin{aligned}
	&\inttO	(\aaa \utt-c^2\Delta u - \delta \Delta \ut)(\utt-c^2\Delta u)\dxs\\
	 \leq& \,\begin{multlined}[t]- \inttO \aaa_t \ut (\utt-c^2\Delta u)\dxs+\inttO f (\utt-c^2\Delta u)\dxs\\ + \taua C_{\frakKone} \|\utt(0)-c^2 \Delta u(0)\|^2_{L^2}.
	 \end{multlined}
	\end{aligned}
\end{equation}
From here, using H\"older's and Young's inequalities, for any $\varepsilon>0$ we have
\begin{equation}
	\begin{aligned}
&\begin{multlined}[t]	\|\sqrt{\aaa} \utt\|^2_{L^2(L^2)}+  \frac{\delta}{2}\|\nabla \ut(t)\|^2_{L^2}\Big \vert_0^t+\frac{\delta c^2}{2}\|\Delta u(t)\|^2_{L^2} \Big \vert_0^t
	\end{multlined} \\
\lesssim&\, \begin{multlined}[t] \|\aaa_t\|^2_{L^\infty(L^4)} \|\ut\|^2_{L^2(L^4)} + \|\Delta u\|^2_{L^2(L^2)} + \|f\|^2_{L^2(L^2)}+\varepsilon \|\utt\|^2_{L^2(L^2)} \\
	+ c^2\inttO (\aaa+ 1) \Delta u \utt \dxs
	+ \taua \|u_2\|^2_{L^2} +(1+\taua)\|\Delta u_0\|^2_{L^2}. 
	\end{multlined}
	\end{aligned}
\end{equation}
We can further bound the first term on the right using the embedding $\Honezero \hookrightarrow L^4(\Omega)$:
\begin{equation}
	\|\aaa_t\|^2_{L^\infty(L^4)} \|\ut\|^2_{L^2(L^4)}  \lesssim \|\aaa_t\|^2_{L^\infty(L^4)} \|\nabla \ut\|^2_{L^2(L^2)} .
\end{equation}
The $\aaa+1$ term on the right we can treat by relying on assumption \eqref{nondeg_aaa} as follows:
\begin{equation}
	\begin{aligned}
	\left| \inttO (\aaa+ 1) \Delta u \utt \dxs\right| \lesssim&\, (\olaaa+1) \|\Delta u\|_{L^2(L^2)}\|\utt\|_{L^2(L^2)} \\
	\lesssim&\, \|\Delta u\|_{L^2(L^2)}^2 + \varepsilon \|\utt\|_{L^2(L^2)}^2.
	\end{aligned}
\end{equation}
Thus selecting $\varepsilon>0$ small enough and then applying Gr\"onwall's inequality leads to the following uniform bound in $\tau$:
\begin{equation}
	\begin{aligned}
	&\begin{multlined}[t]	\| \utt\|^2_{L^2(L^2)}+  \|\nabla \ut(t)\|^2_{\Ltwo}+\|\Delta u(t)\|^2_{L^2} 
	\end{multlined} \\
	\leq&\, \begin{multlined}[t] C_{\textup{lin}} \left\{\| u_0\|^2_{H^2}+\|u_1\|_{H^1}^2+\taua \|u_2\|^2_{L^2}+\|f\|^2_{L^2(L^2)}\right\}
	\end{multlined}
	\end{aligned}
\end{equation}
a.e.\ in time. The constant has the form
\begin{equation}\label{C_lin}
	C_{\textup{lin}}=C(\delta)\exp\left\{(1+ \|\aaa_t\|^2_{L^\infty(L^4)})T\right\}
\end{equation}
and tends to $\infty$ as $\delta \searrow 0$. Thus having strong damping $-\delta \Delta \ut$ in the limiting equation is essential for this testing procedure to work. 
\begin{remark}[Initial data for the equation with Westervelt nonlinearities in pressure form]
	If we follow the derivation in Section~\ref{Sec:Modeling} leading to the Westervelt-type equation \eqref{JMGT_West_nonlocal_pressure} in the pressure form when $\frakK \in L^1(0,T)$, the source term is given in \eqref{source_Westervelt_type}. We would need to assume the initial data $(u, u_{tt}) \vert_{t=0}$ to be zero for the analysis in Section~\ref{Sec:Est_WestType} to hold as otherwise we cannot have $L^2(0,T)$ regularity of the right-hand side. However, since equation \eqref{GFE_type_eq} and its treatment are also of independent interest, 
	we consider it below with general initial conditions and source term.
\end{remark}
We formalize next the above reasoning by proving the following existence result for a linear problem which (after also proving uniqueness) we intend to later combine with a fixed-point approach. As we are interested in the limiting behavior as $\tau \searrow 0$, we restrict our considerations to $\tau \in (0, \bar{\tau}]$ for some given fixed $\bar{\tau}>0$.
\begin{proposition}\label{Prop:Wellp_Westlin}
	Let $T>0$, $\delta>0$, and $\tau \in (0, \bar{\tau}]$.
	Let assumptions \ref{Aone} --\ref{Athree} on the kernel $\frakKone$ hold. Let the variable coefficient satisfy \eqref{smoothness_aaa_West} and \eqref{nondeg_aaa}. Let $f \in L^2(0,T; L^2(\Omega))$ and
	\begin{equation}
		\begin{aligned}
		(u_0, u_1, u_2) \in \left(\Honetwo \right) \times \Honezero \times \Ltwo.
		\end{aligned}
	\end{equation}
	Then there exists a solution $u$, such that
	\begin{equation} \label{def_calU_GFE}
		\begin{aligned}
	&u \in \calUW =  L^\infty(0,T; \Honetwo) \cap W^{1, \infty}(0,T; \Honezero) \cap H^2(0,T; \Ltwo), \\ & \taua \frakKone*(\utt-c^2\Delta u)_t \in L^2(0,T; H^{-1}(\Omega)), 
	\end{aligned}
	\end{equation} of the following problem:
	\begin{equation} \label{weak_GFE_lin}
	\left \{	\begin{aligned}
		&\begin{multlined}[t]	\intTO {\tau^a}  \frakKone * (\utt-c^2 \Delta u)_t v \dxt+\intTO (\aaa(x,t) u_{t})_t v\dxt
			\\-c^2 \intTO  \Delta u v \dxt 
			+\delta \intTO\nabla \ut \cdot \nabla v \dxt \\ = \intTO f(x,t) v \dxt,\end{multlined} \\
			&\text{ for all } v \in L^2(0,T; \Honezero)\ \text {with }	(u, u_t, u_{tt})\vert_{t=0} =(u_0, u_1, u_2).
	\end{aligned} \right. 
\end{equation}
 The solution satisfies  
	\begin{equation}\label{final_est_Westlin}
		\begin{aligned}
			\|u\|_{\calUW}^2 \lesssim_T \|u_0\|^2_{H^2}+\|u_1\|^2_{H^1} + \tau^a  \|u_2\|^2_{L^2}+ \|f\|_{L^2(L^2)}^2,
		\end{aligned}
	\end{equation}
where the hidden constant is given by \eqref{C_lin} and does not depend on $\tau.$ 
\end{proposition}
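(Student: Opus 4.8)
The plan is to build the solution by a Faedo--Galerkin scheme and to obtain \eqref{final_est_Westlin} by reproducing, at the discrete level, the formal test with $y=\utt-c^2\Delta u$ sketched above. Let $\{w_j\}_{j\geq1}\subset\Honetwo$ be the $\Ltwo$-orthonormal Dirichlet eigenfunctions of $-\Delta$, and seek $\un=\sum_{j=1}^n\xi^{(n)}_j(t)w_j$ solving the projected equation with $(\un,\unt,\untt)\vert_{t=0}$ equal to the $\Ltwo$-projections of $(u_0,u_1,u_2)$; these converge to $(u_0,u_1,u_2)$ in $\Honetwo\times\Honezero\times\Ltwo$. The only nonstandard point in solving this semidiscrete system is the leading memory term $\taua\frakKone*(\untt-c^2\Delta\un)_t$. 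Here I would invoke assumption \ref{Atwo}: convolving the projected equation with the resolvent $\tfrakK$ and using $\tfrakK*\frakKone=1$ replaces $\taua\frakKone*y^n_t$ by $\taua(y^n-y^n(0))$, turning the system into a Volterra equation of the second kind for the coefficients of $\untt$, with invertible leading coefficient $\taua>0$ and $L^1$-in-time kernels (note $\tfrakK\in L^2(0,T)\subset L^1(0,T)$). Standard Volterra theory then yields a solution on all of $[0,T]$ (the case $\frakKone=\delta_0$ reduces directly to a third-order ODE system).

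With the approximations in hand, the core is the uniform estimate. As $\Delta\un=-\sum_j\lambda_j\xi^{(n)}_j w_j$ again lies in the Galerkin space, $y^n:=\untt-c^2\Delta\un$ is an admissible test function. Testing with $y^n$ and invoking \ref{Athree} --- applied for a.e.\ $x\in\Omega$ to the scalar map $t\mapsto y^n(x,t)$ and integrated over $\Omega$ by Fubini --- reproduces the chain of inequalities of the formal computation, with the boundary contribution $\taua C_{\frakKone}\|y^n(0)\|_{\Ltwo}^2=\taua C_{\frakKone}\|u_2^n-c^2\Delta u_0^n\|_{\Ltwo}^2$ controlled by $\taua(\|u_2\|_{\Ltwo}^2+\|\Delta u_0\|_{\Ltwo}^2)$. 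Integrating by parts in space in the $-\delta\Delta\unt$ and $-c^2\Delta\un$ terms yields the coercive quantities $\tfrac{\delta}{2}\|\nabla\unt(t)\|_{\Ltwo}^2$ and $\tfrac{\delta c^2}{2}\|\Delta\un(t)\|_{\Ltwo}^2$; the indefinite term $c^2\!\int\!\int(\aaa+1)\Delta\un\,\untt$ is handled through \eqref{nondeg_aaa} and a Young split, the resulting $\varepsilon\|\untt\|^2$ pieces being absorbed into $\|\sqrt{\aaa}\,\untt\|_{L^2(L^2)}^2$, and the term $\|\aaa_t\|_{L^\infty(L^4)}\|\unt\|_{L^2(L^4)}$ is dominated via $\Honezero\hookrightarrow L^4(\Omega)$ (valid for $d\leq3$). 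Gr\"onwall's inequality then gives \eqref{final_est_Westlin} for $\un$ with the constant \eqref{C_lin}, uniformly in both $n$ and $\tau\in(0,\bar\tau]$.

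Passing to the limit $n\to\infty$, the uniform bound provides a subsequence with $\un\overset{*}{\rightharpoonup}u$ in $L^\infty(0,T;\Honetwo)$, $\unt\overset{*}{\rightharpoonup}\ut$ in $L^\infty(0,T;\Honezero)$, and $\untt\rightharpoonup\utt$ in $L^2(0,T;\Ltwo)$; the full $L^\infty(0,T;\Honetwo)$ bound is recovered from $\|\Delta\un\|_{L^\infty(L^2)}$ via elliptic regularity $\|\,\cdot\,\|_{H^2(\Omega)}\lesssim\|\Delta\,\cdot\,\|_{\Ltwo}$, which holds on the $C^{1,1}$ or convex Lipschitz domains assumed here. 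Weak(-$*$) lower semicontinuity then already yields $u\in\calUW$ together with \eqref{final_est_Westlin}. Because $\aaa$ is a fixed coefficient, the terms $\aaa\untt$ and $\aaa_t\unt$ are linear in the approximation and pass to the limit by weak continuity of multiplication by a fixed $L^\infty(L^\infty)$, resp.\ $L^\infty(L^4)$, function; for the memory term I would test against $v=\phi(t)w_j$ with $\phi\in C_c^\infty(0,T)$ and write $\frakKone*(y^n)_t=\partial_t(\frakKone*y^n)-\frakKone(\cdot)\,y^n(0)$ to move the derivative onto $\phi$, so that weak continuity of convolution by the fixed kernel $\frakKone\in L^1(0,T)$ on $L^2(0,T;\Ltwo)$ and $y^n(0)\to u_2-c^2\Delta u_0$ in $\Ltwo$ let each term converge to its counterpart for $u$, giving \eqref{weak_GFE_lin}.

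The step I expect to be the main obstacle is interpreting and attaining the third initial condition $\utt\vert_{t=0}=u_2$, since $\utt$ lies only in $L^2(0,T;\Ltwo)$ and need not be time-continuous. Rearranging \eqref{GFE_type_eq} gives $\taua\frakKone*y_t=f-(\aaa\ut)_t+c^2\Delta u+\delta\Delta\ut$, whose right-hand side I can bound in $L^2(0,T;\Hneg)$ from the regularity just obtained. Pairing with an arbitrary $w\in\Honezero$ and commuting the pairing with the convolution, the scalar map $g_w(t):=(y(t),w)$ satisfies $g_w\in L^2(0,T)$ and $\frakKone*g_w'\in L^2(0,T)$, so $g_w\in X_{\frakKone}^2(0,T)$ and, by Lemma~\ref{Lemma:Caputo_seq_compact}, $g_w\in C[0,T]$; since $|g_w(0)|\lesssim\|w\|_{\Honezero}$, the map $w\mapsto g_w(0)$ is a bounded functional, so $y=\utt-c^2\Delta u$ admits a well-defined initial value $y(0)\in\Hneg$. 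As $u\in C([0,T];\Honezero)$ fixes $\Delta u(0)$, one recovers $\utt(0)=y(0)+c^2\Delta u(0)$, which is identified with $u_2$ through the usual integration by parts in time in \eqref{weak_GFE_lin} against $v=\phi(t)w$ with $\phi(T)=0$ and comparison of the $t=0$ boundary terms across the limit. The conditions $u(0)=u_0$ and $\ut(0)=u_1$ follow from the stronger time regularity of $u$ and $\ut$ in the standard way.
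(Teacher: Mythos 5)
Your proposal is correct and follows essentially the same route as the paper's proof: a Faedo--Galerkin discretization whose solvability is reduced to a second-kind Volterra system via the resolvent $\tilde{\mathfrak{K}}$ from \ref{Atwo}, the uniform-in-$\tau$ energy estimate obtained by testing with $y^n = u^{(n)}_{tt} - c^2\Delta u^{(n)}$ and invoking the coercivity \ref{Athree}, weak(-$*$) limit extraction, and attainment of $u_{tt}(0)=u_2$ via the convolution differentiation formula and a comparison of the $t=0$ terms across the limit. The only deviations are minor technical variants within the same skeleton: you pass to the limit in the memory term by moving the time derivative onto compactly supported test functions and using weak continuity of convolution with the fixed $L^1$ kernel (where the paper instead bootstraps a uniform $L^2(0,T;H^{-1}(\Omega))$ bound on $\tau^a\mathfrak{K}\ast u^{(n)}_{ttt}$ and applies Lemma~\ref{Lemma:Caputo_seq_compact}), and you give meaning to the trace of $u_{tt}-c^2\Delta u$ at $t=0$ through the embedding $X^2_{\mathfrak{K}}\hookrightarrow C[0,T]$ of that same lemma rather than through the paper's $L^\infty(0,T;H^{-1}(\Omega))$ bound on $u^{(n)}_{tt}$ obtained from Young's convolution inequality and \ref{Atwo}.
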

\begin{proof}
We conduct the proof 
using a Faedo--Galerkin semi-discretization in space based on a finite-dimensional subspace $V_n \subset \Honetwo$. We refer to, e.g.,~\cite[Proposition 5.2]{kaltenbacher2022time} for similar arguments in the analysis of equations of higher fractional order. The distinguishing feature of the present Galerkin analysis is that it should be uniform with respect to the relaxation time $\tau$. We present the proof in case $\frakK \in L^1(0,T)$; the arguments given below can be adapted in a straightforward manner to the case $\frakK=\delta_0$. \\
\indent By relying on the existence theory for the Volterra integral equations~\cite{gripenberg1990volterra}, we can prove that there is a unique approximate solution $\un \in W^{2, \infty}(0,T; V_n)$. As these arguments are relatively common, we postpone their details to Appendix~\ref{Appendix:Galerkin}.  \\
\indent Using the estimation techniques outlined at the beginning of this section, we derive the following bound on $\un$:
	\begin{equation} \label{est_discrete_GFE_lin}
		\begin{aligned}
			&\begin{multlined}[t]\	\|\untt\|^2_{L^2(L^2)}+  \|\nabla \unt\|^2_{L^\infty(L^2)}+\|\Delta \un\|^2_{L^\infty(L^2)} 
			\end{multlined} \\
			\lesssim&_T\, \begin{multlined}[t] \| u_0\|^2_{H^2}+\| u_1\|_{H^1}^2+\taua \|u_2\|^2_{L^2}+\|f\|^2_{L^2(L^2)}. 
			\end{multlined}
		\end{aligned}
	\end{equation}
Below we do not relabel any subsequences. Thanks to this bound that is uniform in $n$, there is a subsequence which converges in the following weak(-$*$) sense:
	\begin{equation} \label{weak_limits_Westlin_1}
	\begin{alignedat}{4} 
		\un &\stackrel{\ast}{\relbar\joinrel\rightharpoonup} u && \  &&\text{ in } &&L^\infty(0,T; \Honetwo),  \\
		\unt &\stackrel{\ast}{\relbar\joinrel\rightharpoonup} u_t&&  \  && \text{ in } &&L^\infty(0,T; \Honezero),\\
		\untt &\relbar\joinrel\rightharpoonup u_{tt} && \  &&\text{ in } &&L^2(0,T; \Ltwo),
	\end{alignedat} 
\end{equation} 
as $n \rightarrow \infty$. Since $\frakK \in L^1(0,T)$, then also
	\begin{equation} \label{weak_limits_Westlin_2}
	\begin{alignedat}{4} 
		\frakK* \Delta \unt \stackrel{\ast}{\relbar\joinrel\rightharpoonup} \frakK*\Delta \ut \text{ in } L^\infty(0,T; \Hneg).
	\end{alignedat} 
\end{equation} 
By bootstrapping, we find the following uniform bound on the leading term:
\begin{equation}
	\begin{aligned}
	&\|	{\tau^a}  \frakKone * (\untt-c^2 \Delta \un)_t\|_{L^2(\Hnn)} \\
	=&\, \|-\aaa \untt
	+c^2  \Delta \un +\delta \Delta \unt+f\|_{L^2(\Hnn)} \leq C,
	\end{aligned}
\end{equation}
and thus
\begin{equation}
	\begin{aligned}
		\|	{\tau^a}  \frakKone * \unttt\|_{L^2(\Hnn)}\ \leq  c^2 \bar{\tau}^a	\|	  \frakKone \|_{L^1(0,T)} \| \Delta \unt\|_{L^2(\Hnn)} +C \leq \tilde{C},
	\end{aligned}
\end{equation}
where the constants $C$ and $\tilde{C}$ do not depend either on $n$ nor on $\tau$. Thanks to this uniform bound, by assumption \ref{Atwo} and Lemma~\ref{Lemma:Caputo_seq_compact}, we have 
	\begin{equation} \label{weak_limits_West_3}
	\begin{alignedat}{4} 
	{\tau^a}  \frakKone * \unttt  \relbar\joinrel\rightharpoonup 	{\tau^a}  \frakKone * \uttt   \text{ in } L^2(0,T; \Hneg).
	\end{alignedat} 
\end{equation} 
We can then pass to the limit in the semi-discrete problem in the usual way and show that $u$ solves \eqref{weak_GFE_lin}. Note that since $\tfrakK \in L^2(0,T)$, then from the  bound on 
\[
\taua \frakKone* \unttt := \tilde{f}^{(n)}
\]
in $L^2(0,T; H^{-1}(\Omega))$, using Young's convolution inequality we also have
\begin{equation} \label{Linf_bound_untt}
\begin{aligned}
\taua\|\untt\|_{L^\infty(H^{-1})}=&\, \taua\|u^{(n)}_2 + \tfrakK*\tilde{f}^{(n)}\|_{L^\infty(H^{-1})} \\
\lesssim&\,  \|u^{(n)}_2\|_{H^{-1}} + \|\tfrakK\|_{L^2}\|\tilde{f}^{(n)}\|_{L^2(H^{-1})}.
\end{aligned}
\end{equation}
\indent By the weak limits in \eqref{weak_limits_Westlin_1} and the Aubin--Lions--Simon lemma (see~\cite[Corollary 4]{simon1986compact}), we also have strong convergence in the following sense:
	\begin{equation} \label{weak_limits_GFE_3}
\begin{alignedat}{4} 
\un&\longrightarrow u && \quad \text{ strongly}  &&\text{ in } &&C([0,T]; \Honezero),  \\
\unt &\longrightarrow u_t && \quad \text{ strongly}  &&\text{ in } &&C([0,T]; \Ltwo),
\end{alignedat} 
\end{equation}
as $n \rightarrow \infty$, from which we conclude that $u(0)=u_0$ and $u_t(0)=u_1$.\\
\indent We next show that $u$ also attains the third initial condition. Let $v \in C^1([0,T]; \Honezero)$ with $v(T)=v_t(T)=0$. By subtracting the weak forms for $u$ and $\un$, where we integrate by parts in the leading convolved term using the formula
\begin{equation}\label{diff_conv}
\frakK* w_t = (\frakK*w)_t - \frakK(t) w(0),
\end{equation}
 and then passing to the limit in $n$,  we obtain
\begin{align} \label{attainment_u2}
	-\taua \inttO \frakK(s)(\utt(0)-u_2)v \dxs=0
\end{align}
for all $v \in C^1([0,T]; \Honezero)$ with $v(T)=v_t(T)=0$. Here we have also relied on
$
 \taua(\frakKone* \untt)(0)= \taua(\frakKone* \utt)(0) =0$,
which follows by the $L^\infty$ regularity in time of $\untt$ established \eqref{Linf_bound_untt} and
\begin{equation} \label{Linf_bound_utt}
\begin{aligned}
\taua\|\utt\|_{L^\infty(H^{-1})} \leq &\, \taua \liminf_{n \rightarrow \infty}	\|\untt\|_{L^\infty(H^{-1})} \leq C.
\end{aligned}
\end{equation}
 Therefore from \eqref{attainment_u2} (since $\frakK$ cannot be identically zero by the assumptions on its resolvent), we have $\utt(0)=u_2$. Thus, $u$ is a solution of \eqref{weak_GFE_lin}. By the weak limits in \eqref{weak_limits_Westlin_1} and the weak lower semicontinuity of norms, we conclude that $u$ satisfies stability bound \eqref{final_est_Westlin}.
\end{proof}
Note that $	u \in \calUW$ implies by Lemma 3.3 in~\cite[Ch.\ 2]{temam2012infinite} the following weak continuity in time:
\begin{equation}
	\begin{aligned}
	  u \in C_w([0,T]; \Honetwo), \ \ut \in C_{w}([0,T]; \Honezero).
	\end{aligned}
\end{equation}
We next wish to prove that the solution of the problem we have constructed is the only solution to \eqref{weak_GFE_lin}. To prove uniqueness, we should show that the only solution $u$ of the homogeneous problem (where $f=0$ and $u_0=u_1=u_2$) is $u=0$. However, we are not allowed to test directly with $y=\utt-c^2 \Delta u $ in this setting and replicate the previous energy arguments, as $y$ only belongs to $L^2(0,T; L^2(\Om))$. Instead we employ an approach based on considering an adjoint problem where we adapt the ideas from~\cite[Theorem 3, p.\ 573]{dautray1992evolution} developed for integer-order equations.
\begin{lemma}
The solution $u$ constructed in Proposition~\ref{Prop:Wellp_Westlin} is unique. 
\end{lemma}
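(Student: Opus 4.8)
Since the problem is linear, it suffices to show that the homogeneous problem---with $f=0$ and $u_0=u_1=u_2=0$---has only the trivial solution $u=0$ in $\calUW$. The obstruction, as noted, is that the energy argument behind Proposition~\ref{Prop:Wellp_Westlin} is driven by the test function $y=\utt-c^2\Delta u$, which matches the coercivity structure \ref{Athree} of the kernel but belongs only to $L^2(0,T;\Ltwo)$, and is therefore not an admissible test function in \eqref{weak_GFE_lin}. The plan is to circumvent this by a duality argument in the spirit of~\cite[Theorem 3, p.\ 573]{dautray1992evolution}: rather than testing $u$ against $y$, I would pair $u$ against the solution of a backward-in-time adjoint problem driven by arbitrary data, and then read off $u=0$ from the arbitrariness of that data.

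Concretely, I would fix $g\in C_c^\infty((0,T)\times\Omega)$ and consider the adjoint problem obtained by formally transposing the operator in \eqref{GFE_type_eq}: its leading nonlocal term carries the \emph{backward} convolution $(\frakKone\Fconv\psi)(t):=\int_t^T\frakKone(\sigma-t)\psi(\sigma)\dsigma$, the damping term changes sign in time, the coefficient becomes $\aaa(\cdot,T-\cdot)$, and the problem is supplemented with homogeneous \emph{terminal} conditions $z(T)=z_t(T)=z_{tt}(T)=0$ and source $g$. The key structural point is that under the reflection $t\mapsto T-t$ this adjoint problem becomes a forward problem of exactly the type treated in Proposition~\ref{Prop:Wellp_Westlin}: a direct change of variables gives the identity $(\frakKone\Fconv\psi)(T-\cdot)=\frakKone\Lconv\big(\psi(T-\cdot)\big)$, so the reflected kernel is again $\frakKone$ and assumptions \ref{Aone}--\ref{Athree}, in particular the coercivity bound, are preserved; likewise $\aaa(\cdot,T-\cdot)$ still satisfies \eqref{smoothness_aaa_West}--\eqref{nondeg_aaa}. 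Proposition~\ref{Prop:Wellp_Westlin}, applied after reflection, then furnishes a solution $z$ in the reflected analogue of $\calUW$, which in particular lies in $L^2(0,T;\Honezero)$ and is thus an admissible test function in \eqref{weak_GFE_lin}.

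With $z$ at hand, I would insert $v=z$ into the weak form \eqref{weak_GFE_lin} for the homogeneous $u$ (whose right-hand side is zero) and integrate by parts to transfer all derivatives onto $z$. For the nonlocal term I would use \eqref{diff_conv} together with the fact that in the homogeneous case $y(0)=u_2-c^2\Delta u_0=0$, so that $\frakKone\Lconv y_t=(\frakKone\Lconv y)_t$, and then move the convolution onto $z$ via the Fubini identity $\int_0^T(\frakKone\Lconv\phi)\psi\dt=\int_0^T\phi\,(\frakKone\Fconv\psi)\dt$. All boundary terms in time vanish because $u$ has zero initial data while $z$ has zero terminal data, and the spatial boundary terms vanish since both $u$ and $z$ lie in $\Honezero$ (and in $\Honetwo$ wherever a Laplacian must be integrated by parts). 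Because $z$ solves the adjoint equation with source $g$, the net identity reads $\intTO g\,u\dxt=0$. As $g\in C_c^\infty((0,T)\times\Omega)$ was arbitrary and this class is dense in $L^2(0,T;\Ltwo)$, I conclude $u\equiv 0$, which is the asserted uniqueness.

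The main obstacle throughout is the nonlocal term. First, one must set up and solve the backward adjoint problem for the weakly singular reflected kernel; although the reflection trick reduces this to Proposition~\ref{Prop:Wellp_Westlin}, one still has to verify carefully that the third terminal condition $z_{tt}(T)=0$ is attained---an argument mirroring the delicate attainment of $\utt(0)=u_2$ in the proof of that proposition, which relies on assumption \ref{Atwo} (the $L^2$ resolvent $\tfrakK$) to control $z_{tt}$ in $L^\infty(0,T;\Hneg)$. Second, the integration by parts in the convolution term must be justified at the available regularity, and here again \ref{Atwo} together with Fubini's theorem are what make the transpose identity and the manipulation of $\frakKone\Lconv y_t$ rigorous for a merely $L^1$ kernel. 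Once these nonlocal bookkeeping issues are handled, the remaining local terms are standard.
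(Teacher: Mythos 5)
Your proposal is correct and takes essentially the same route as the paper: a duality argument in the spirit of Dautray--Lions, in which the time-reversed adjoint problem is solved via Proposition~\ref{Prop:Wellp_Westlin}, the convolution term is handled through the transposition (Fubini) identity together with the vanishing initial data of $u$ and terminal data of the adjoint solution, and $u=0$ follows from the arbitrariness of $g$. The only cosmetic difference is the direction of the pairing---you test the homogeneous problem with the adjoint solution, whereas the paper tests the adjoint problem with $v=u$---but both manipulations rest on the same identities and yield $\intTO g\, u \dxt = 0$.
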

\begin{proof}
 The statement will follow by testing the adjoint problem with a convenient test function. Given an arbitrary $g \in L^2(0,T; L^2(\Omega))$, consider the adjoint problem after time reversal: 
\begin{equation}\label{adjoint_pproblem}
	\begin{aligned}
	\begin{multlined}[t]	\taua	\intTO (\frakKone* p)_{ttt}(T-t) v(t)\dxt + \intTO (\tilde{\aaa} p_t)_{t}(T-t) v(t)\dxt \\
		- c^2 \intTO \Delta p(T-t) v \dxt - \taua c^2 \intTO (\frakKone*\Delta p)_t(T-t)v(t)\dxt\\
		- \delta \intTO \nabla p_t(T-t) \cdot \nabla v \dxt = \intTO g(T-t) v \dxt,
		\end{multlined}
	\end{aligned}
\end{equation}
for all $v \in L^2(0,T; \Honezero) $, with $(p, p_t, p_{tt}) \vert_{t=0}=(0,0,0)$ and using the notation $\tilde{\aaa}(t)=\aaa(T-t)$. 
Due to the homogeneous initial data, we have $(\frakKone*p)_{ttt}=\frakKone*p_{ttt}$ and $(\frakKone*\Delta p)_t=\frakKone*\Delta p_t.$ By Proposition~\ref{Prop:Wellp_Westlin}, this problem has a solution $p \in \calUW$ with $\taua \frakK*p_{ttt} \in L^2(0,T; \Hneg)$. \\
\indent We test it next with $v=u$, which is a valid test function since it belongs to $L^2(0,T; \Honezero)$. We use the following integration by parts formula:
\begin{equation} \label{integration_by_parts}
	\begin{aligned}
\int_0^T q_t(T-t)w(t) \dt =\int_0^T q(T-t) w_t(t) \dt- w(T)q(0)+w(0)q(T),
	\end{aligned}
\end{equation}
valid for functions $q$, $w \in W^{1,1}(0,T)$;  see~\cite[Sec.\ 2]{kaltenbacher2021determining}. 
We also rely on the transposition identity (that is, the associativity property of convolution): 
\begin{equation} \label{transposition_identity}
	\begin{aligned}
		\int_0^T (\frakK*q)(T-t) w(t) \dt=\int_0^T (\frakK*w)(t)q(T-t)\dt
	\end{aligned}
\end{equation}
for $q$, $w \in L^1(0,T)$. By \eqref{integration_by_parts}, we have
\begin{equation}
		\begin{aligned}
	\taua	\intTO (\frakK* p)_{ttt}(T-t) u(t)\dxt
	 =\,  \taua	\intTO (\frakK* p_t)(T-t) \utt(t)\dxt,
		\end{aligned} 
\end{equation}
where we have also used that $(u, \ut)\vert_{t=0}=(0,0)$ and \[(\frakK* p)_t(0)=(\frakK* p)_{tt}(0)=0.\]
Then by the associativity property of convolution and the fact that $\utt \vert_{t=0}=0$,
\begin{equation}
	\begin{aligned}
		\taua	\intTO (\frakK* p)_{ttt}(T-t) u(t)\dxt
		=& \,  \taua	\intTO (\frakK* p_t)(T-t) \utt(t)\dxt \\
		=&\,  \taua	\intTO (\frakK* \utt)(t) p_t(T-t)\dxt \\
		=&\,  \taua	\intTO (\frakK* \uttt)(t) p(T-t)\dxt.
	\end{aligned} 
\end{equation}
Next, again by \eqref{integration_by_parts}, 
\begin{equation}
	\begin{aligned}
		\intTO (\tilde{\aaa} p_t)_{t}(T-t) u(t)\dxt =&\, \intTO p(T-t) (\aaa u_{t})_t(t)\dxt. 
	\end{aligned}
\end{equation}
We can treat the other terms on the left-hand side of \eqref{adjoint_pproblem} in a similar manner to arrive at 
\begin{equation}
	\begin{aligned}
	\begin{multlined}[t]	\intTO u(t) g(T-t) \dxt
		 = 
			\taua	\intTO (\frakKone* \uttt)(t) p(T-t)\dxt \\+ \intTO (\aaa \ut )_t(t) p(T-t)\dxt 
		- c^2 \intTO \Delta u(t) p(T-t) \dxt\\ - \taua c^2 \intTO (\frakKone*\Delta \ut)(t)p(T-t)\dxt
		- \delta \intTO \nabla \ut(t) \cdot \nabla p(T-t) \dxt.
		\end{multlined} 
	\end{aligned}
\end{equation}
Since $u$ solves the original (homogeneous) problem, the right-hand side is equal to zero.
As $g$ was arbitrary, from here we conclude that $u=0$.
\end{proof}
To relate the obtained well-posedness result to the nonlinear problem, we next introduce a fixed-point mapping $\calT: \calBW \ni u^* \mapsto u$, 
which maps $u^*$ taken from the ball
\begin{equation}
	\begin{aligned}
		\calBW =\left \{ u \in \calUW:\right.&\, \, \|u\|_{\calUW} \leq R,\ 
		\left	(u, u_t, u_{tt})\vert_{t=0} =(u_0, u_1, u_2) \} \right.
	\end{aligned}
\end{equation}
to the solution $u$ of the linear problem given in \eqref{weak_GFE_lin} with the coefficient $\aaa(u^*) =1+2k_1 u^*$.
The radius $R>0$ is independent of $\tau$ and will be chosen as small as needed by the upcoming proof. 
\begin{theorem}[Uniform well-posedness of equations with Westervelt-type nonlinearities] \label{Thm:WellP_West} 
	Let $T>0$ and $\tau \in (0, \bar{\tau}]$. Assume that $c$, $\delta>0$ and $k_1 \in \R$. Let assumptions \ref{Aone} --\ref{Athree} on the kernel hold. Furthermore, let
	\[
		(u_0, u_1, u_2) \in  \left(\Honetwo \right) \times \Honezero \times  \Ltwo\]
and $f \in L^2(0,T; L^2(\Omega))$. There exists $r=r(T)>0$, independent of $\tau$, such that if
	\begin{equation}\label{smallness_r_Westervelt}
		\|u_0\|^2_{H^2}+\|u_1\|^2_{H^1} + \bar{\tau}^a  \|u_2\|^2_{L^2}+ \|f\|^2_{L^2(L^2)} \leq r^2,
	\end{equation}
	then there is a unique solution $u \in \calBW$ of the nonlinear problem
	\begin{equation} \label{weak_GFE}
	\left \{	\begin{aligned}
		&\begin{multlined}[t]	\intTO \taua  \frakKone * (\utt-c^2 \Delta u)_t v \dxs+\intTO (\aaa(u) u_{t})_t v\dxs
			\\-c^2 \intTO  \Delta u v \dxs 
			+\delta \intTO\nabla \ut \cdot \nabla v \dxs \\=\intTO f v \dxs,\end{multlined} \\
		&\ \text{for all }v \in L^2(0,T; \Honezero),\text{ with }	(u, u_t, u_{tt})\vert_{t=0} =(u_0, u_1, u_2).	
	\end{aligned} \right. 
\end{equation}
 The solutions satisfies the following bound:
\begin{equation}
	\begin{aligned}
		\|u\|_{\calUW} \lesssim_T 	\|u_0\|^2_{H^2}+\|u_1\|^2_{H^1} + \taua  \|u_2\|^2_{L^2}+ \|f\|^2_{L^2(L^2)},
	\end{aligned}
\end{equation}
where the hidden constant does not depend on $\tau$.
\end{theorem}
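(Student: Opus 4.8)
The plan is to obtain $u$ as the unique fixed point of the map $\calT$ introduced above via Banach's theorem, with all constants and smallness thresholds chosen independently of $\tau \in (0,\bar{\tau}]$. First I would verify that $\calT$ is well defined on $\calBW$: for $u^* \in \calBW$ the coefficient $\aaa(u^*) = 1 + 2k_1 u^*$ must meet the hypotheses of Proposition~\ref{Prop:Wellp_Westlin}. Smoothness \eqref{smoothness_aaa_West} follows from $u^* \in \calUW$ together with the embeddings $\Honetwo \hookrightarrow L^\infty(\Omega)$ and $\Honezero \hookrightarrow L^4(\Omega)$ (valid for $d \le 3$), which give $\aaa(u^*) \in L^\infty(L^\infty)$ and $\partial_t \aaa(u^*) = 2k_1 \partial_t u^* \in L^\infty(L^4)$. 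Non-degeneracy \eqref{nondeg_aaa} follows from $|2k_1 u^*| \le 2|k_1|\CHtwo \|u^*\|_{L^\infty(H^2)} \le 2|k_1|\CHtwo R$: choosing $R$ small enough that this is $\le \tfrac12$ yields $\tfrac12 < \aaa(u^*) < \tfrac32$. Proposition~\ref{Prop:Wellp_Westlin} and the subsequent uniqueness lemma then produce a unique $u = \calT u^* \in \calUW$.

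Next I would show $\calT(\calBW) \subseteq \calBW$. The stability bound \eqref{final_est_Westlin} gives $\|u\|_{\calUW}^2 \le \Clin(R)\, r^2$ by \eqref{smallness_r_Westervelt} and $\taua \le \bar{\tau}^a$, where $\Clin(R)$ from \eqref{C_lin} depends on $u^*$ only through $\|2k_1 \partial_t u^*\|_{L^\infty(L^4)} \lesssim |k_1| R$ and is therefore bounded uniformly over $\calBW$ and independent of $\tau$. With $R$ fixed as above, choosing $r \le R/\sqrt{\Clin(R)}$ forces $\|u\|_{\calUW} \le R$.

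For the contraction, take $u_1^*, u_2^* \in \calBW$, set $u_i = \calT u_i^*$, $\bar{u} = u_1 - u_2$, $w = u_1^* - u_2^*$. Subtracting the two linearized problems, $\bar{u}$ solves \eqref{weak_GFE_lin} with coefficient $\aaa(u_1^*)$, homogeneous initial data, and source $h = -\big(2k_1\, w\, \partial_t u_2\big)_t = -2k_1\big(\partial_t w\, \partial_t u_2 + w\, \partial_t^2 u_2\big)$ arising from the coefficient mismatch $(\aaa(u_1^*) - \aaa(u_2^*))\partial_t u_2$. One checks $h \in L^2(L^2)$ through $\|\partial_t w\, \partial_t u_2\|_{L^2(L^2)} \le \|\partial_t w\|_{L^\infty(L^4)}\|\partial_t u_2\|_{L^2(L^4)} \lesssim_T R\|w\|_{\calUW}$ and $\|w\, \partial_t^2 u_2\|_{L^2(L^2)} \le \|w\|_{L^\infty(L^\infty)}\|\partial_t^2 u_2\|_{L^2(L^2)} \lesssim R\|w\|_{\calUW}$. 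Applying \eqref{final_est_Westlin} to $\bar{u}$ with zero data yields $\|\bar{u}\|_{\calUW} \le C(T)\, R\, \|w\|_{\calUW}$, with $C(T)$ bounded as $R \searrow 0$; shrinking $R$ so that $C(T)R < 1$ makes $\calT$ a contraction on the closed ball $\calBW$, complete in the $\calUW$-metric. Banach's theorem then delivers the unique fixed point $u \in \calBW$, which solves \eqref{weak_GFE} and inherits \eqref{final_est_Westlin}.

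The contraction step is the main obstacle. Because the coefficient nonlinearity enters as $(\aaa(u)\partial_t u)_t$, the difference source $h$ carries a time derivative, and its expansion produces the term $w\, \partial_t^2 u_2$ pairing the merely $L^2(L^2)$ quantity $\partial_t^2 u_2$ with $w$; closing this in $L^2(L^2)$ is exactly what forces the $\Honetwo \hookrightarrow L^\infty(\Omega)$ embedding and the full $\calUW$-regularity of $w$, and it is this pairing that supplies the small factor $R$. The accompanying bookkeeping — that the Gr\"onwall constant $\Clin(R)$ inherited from Proposition~\ref{Prop:Wellp_Westlin} stays bounded on $\calBW$ and $\tau$-independent, and that $R$ (small for non-degeneracy and contraction) and the data threshold $r = r(T)$ (small for self-mapping) can be fixed consistently and independently of $\tau \in (0,\bar{\tau}]$ — must be tracked carefully throughout.
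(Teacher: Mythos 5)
Your proposal is correct and follows essentially the same route as the paper: the same fixed-point map $\calT$ on $\calBW$, non-degeneracy of $\aaa(u^*)$ via $\Honetwo \hookrightarrow L^\infty(\Omega)$ with $R$ small, self-mapping from the $\tau$-uniform stability bound \eqref{final_est_Westlin} with $r$ chosen relative to $R$, and contraction by writing the difference equation with coefficient $\aaa(u_1^*)$ and source $-2k_1(w\,\partial_t u_2)_t$ estimated in $L^2(L^2)$ exactly as in the paper. The only (cosmetic) omission is the paper's explicit remark that $\calBW$ is non-empty, which your choice of $r$ already guarantees.
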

\begin{proof}
The statement will follow once we check that the conditions of the Banach fixed-point theorem are satisfied for the introduced mapping. We note that the set $\calBW$ is non-empty as the solution of the linear problem with $\aaa=1$ and $f=0$ belongs to it provided the data size $r$ is chosen relative to $R$, so that
\begin{align} \label{cond_r_R_0}
	C_{\textup{lin}}(\|u_0\|^2_{H^2}+\|u_1\|^2_{H^1} + \bar{\tau}^a  \| u_2\|^2_{L^2}) \leq \Clin r^2  \leq R^2.
\end{align}
	\noindent \emph{Self-mapping}.  Let $u^* \in \calB$. Since then $u^* \in \calUW$, the smoothness assumptions on $\aaa$ in Proposition~\ref{Prop:Wellp_Westlin} are satisfied. The non-degeneracy assumption on $\aaa$ is fulfilled by reducing $R>0$. Indeed, we can rely on the embedding $H^2(\Omega) \hookrightarrow L^\infty(\Omega)$ to show that
	\[
	\|2k_1 u^*\|_{L^\infty(L^\infty)} \leq C(\Omega, T)|k_1| R.
	\]
	Then $R$ should be small enough so that
	\begin{align} \label{smallness_R}
	0<  \ulaaa:= 1-C(\Omega, T)|k_1| R \leq \aaa \leq \olaaa:= 1+C(\Omega, T)|k_1| R. 
	\end{align}
	Further, we have the uniform in $\tau$ bound: $\|\aaa_t\|_{L^\infty(L^4)} \lesssim 	|k_1|\|u^*_t\|_{L^\infty(H^1)} \lesssim R$. 
By employing the estimate of Proposition~\ref{Prop:Wellp_Westlin} with the hidden constant given in \eqref{C_lin}, we obtain 
	\begin{equation}
		\begin{aligned}
			\|u\|^2_{\calUW} 
			\leq\,
			Ce^{(1+R^2)T}(\|u_0\|^2_{H^2}+\|u_1\|^2_{H^1} + \bar{\tau}^a  \| u_2\|^2_{L^2}+ \|f\|_{L^{2}(L^2)}^2) 
			\leq\, Ce^{(1+R^2)T}r^2.
		\end{aligned}
	\end{equation}
For sufficiently small $r$, it holds that
	\begin{equation} \label{cond_r_R}
	\begin{aligned}
 Ce^{(1+R^2)T}r^2 \leq R^2.
	\end{aligned}
\end{equation}
Thus, $u \in \calBW$ for $R$ chosen so that \eqref{smallness_R} holds and then $r$ so that \eqref{cond_r_R_0} and \eqref{cond_r_R} hold. Note that these conditions are imposed independently of $\tau$ as all involved estimates are uniform with respect to the relaxation time. \\[1mm]
	\noindent \emph{Strict contractivity}. Let $\calT u^{*} =u$ and $\calT v^*=v$; denote $\bar{\phi}=u-v$ and $\bar{\phi}^*= u^*-v^*$. Then $\phi$ solves  
	\begin{equation}
		\begin{aligned}
			\begin{multlined}[t]	{\tau^a}  \frakKone *( \bar{\phi}_{tt}-c^2 \Delta \bar{\phi})_t+(\aaa(u^*) \bar{\phi}_{t})_t
				-c^2  \Delta \bar{\phi}
				-   \delta   \Delta \bar{\phi}_{t}\end{multlined}
			=\, -2k_1 (\bar{\phi}^* v_{t})_t
		\end{aligned}
	\end{equation}
with homogeneous data. This problem fits the form of the linear problem we have studied in Proposition~\ref{Prop:Wellp_Westlin} with the right-hand side $f= -2k_1 (\bar{\phi}^* v_{t})_t$. Thus using bound \eqref{final_est_Westlin} together with the embeddings  $H_0^1(\Omega) \hookrightarrow L^4(\Omega)$ and $H^2(\Omega) \hookrightarrow L^\infty(\Omega)$ implies 
\begin{equation}
	\begin{aligned}
	\| \bar{\phi}\|_{\calUW} \lesssim&\,  e^{(1+R^2)T} \|-2k_1 \bar{\phi}^* v_{tt} -2k_1 \bar{\phi}^*_t v_t\|_{L^2(L^2)} \\
	\lesssim&\, e^{(1+R^2)T} |k_1| \left \{ \|\bar{\phi}^*\|_{L^\infty(L^\infty)} \|v_{tt}\|_{L^2(L^2)} + \|\bar{\phi}^*_t \|_{L^\infty(L^4)}\|v_t\|_{L^2(L^4)} \right\} \\
		\lesssim&\,  e^{(1+R^2)T} |k_1| R \|\bar{\phi}^*\|_{\calUW},
	\end{aligned}
\end{equation}
from which we obtain strict contractivity in $\|\cdot\|_{\calUW}$ by reducing $R$ (and thus $r$). The statement then follows by Banach's fixed-point theorem as $\calBW$ is closed with respect to $\|\cdot\|_{\calUW}$ . 
\end{proof}
We mention that the constant $|k_1|$ is relatively small in practice for the Westervelt-type equations (it is inversely proportional to the sound of speed squared), which significantly mitigates the smallness assumption imposed on the data.
\section{Limiting behavior of equations with Westervelt-type nonlinearities}\label{Sec:LimWest} 
Equipped with the previous uniform analysis, we are now ready to discuss the limiting behavior of equations with Westervelt-type nonlinearities as $\tau$ vanishes. Again we present the analysis when $\frakKone \in L^1(0,T)$; the arguments can be adapted in a straightforward manner to the case $\frakKone=\delta_0$. Let $\tau \in (0, \bar{\tau}]$. Consider the following initial boundary-value problem:
\begin{equation} \label{IBVP_GFE_tau}
	\left \{	\begin{aligned}
		&\begin{multlined}[t]	{\tau^a}  \frakKone * (\utt^\tau-c^2\Delta u^\tau)_t+(\aaa(u^\tau) \ut^\tau)_t
			-c^2 \Delta u^\tau 
			-\delta   \Delta \ut^\tau=f \ \textup{ in } \Omega \times (0,T),\end{multlined}\\
		&u^\tau\vert_{\partial \Omega}=0, \\
		&	(u^\tau, \ut^\tau, \utt^\tau)\vert_{t=0} =(u^\tau_0, u^\tau_1, u^\tau_2),
	\end{aligned} \right. 
\end{equation}
under the assumptions of Theorem~\ref{Thm:WellP_West} with the uniform (smallness) bound on data:
	\begin{equation}\label{smallness_r_Westervelt_tau}
	\|u^\tau_0\|^2_{H^2}+\|u^\tau_1\|^2_{H^1} + \bar{\tau}^a \|u^\tau_2\|^2_{L^2}+ \|f\|^2_{L^2(L^2)} \leq r^2.
\end{equation}
From the previous analysis and the obtained $\tau$-uniform bounds on the solution, we know that there exists a subsequence, not relabeled, such that 
\begin{equation} \label{weak_limits_tau_West_1}
	\begin{alignedat}{4} 
		u^\tau&\stackrel{\ast}{\relbar\joinrel\rightharpoonup} u && \   &&\text{ in } &&L^\infty(0,T; \Honetwo),  \\
		\ut^\tau& \stackrel{\ast}{\relbar\joinrel\rightharpoonup} u_t&&  \  && \text{ in } &&L^\infty(0,T; \Honezero),\\
		\utt^\tau &\relbar\joinrel\rightharpoonup u_{tt} && \   &&\text{ in } &&L^2(0,T; \Ltwo),
	\end{alignedat} 
\end{equation} 
as $\tau \searrow 0$. Similarly to the techniques used in the existence proof of Proposition~\ref{Prop:Wellp_Westlin}, by the Aubin--Lions--Simon lemma, this further implies
\begin{equation} \label{weak_limits_tau_West_2}
	\begin{alignedat}{4} 
		u^\tau&\longrightarrow u && \ \text{ strongly}  &&\text{ in } &&C([0,T]; \Honezero),  \\
		\ut^\tau &\longrightarrow u_t && \ \text{ strongly}  &&\text{ in } &&C([0,T]; \Ltwo).
	\end{alignedat} 
\end{equation} 
Thus, we have the convergence of initial data as $\tau \searrow 0$ as follows:
\begin{equation} \label{limits_initial_tau_West}
	\begin{alignedat}{4} 
		u^\tau_0=u^\tau(0)&\longrightarrow u(0):=u_0 && \quad \text{ strongly}  &&\text{ in } &&\Honezero,  \\
		u^\tau_1=\ut^\tau(0) &\longrightarrow u_t(0):=u_1 && \quad \text{ strongly}  &&\text{ in } &&\Ltwo.
	\end{alignedat} 
\end{equation} 
We wish to prove that $u$ solves the limiting problem for the Westervelt equation. Let $v \in C^\infty([0,T]; C_0^\infty(\Omega))$ with $v(T)=0$. Setting $\bar{u}=u-u^\tau$ and relying on the weak form in \eqref{weak_GFE} that is satisfied by $u^\tau$, we have
\begin{equation}\label{weak_limit_baru}
	\begin{aligned}
		&\begin{multlined}[t] \intTO (\aaa(u) \ut)_t v \dxt- c^2 \intTO  \Delta u v \dxt + \delta \intTO  \nabla \ut \cdot \nabla v \dxt
		\\	-\intTO f v \dxt \end{multlined}\\
		=&	\begin{multlined}[t]  \intTO ( \aaa(u)\bar{u}_{t})_t v \dxt- c^2 \intTO  \Delta \bar{u} v \dxt+ \delta \intTO  \nabla \bar{u}_{t} \cdot \nabla v \dxt \\
			- \intTO  \taua \frakKone * (\utt^\tau -c^2 \Delta u^\tau)_t v \dxt 
			+2k_1\intTO (\bar{u}\ut^\tau)_t v \dxt.  \end{multlined}
	\end{aligned}
\end{equation}
We should prove that the right-hand side tends to zero as $\tau \searrow 0$. To this end, we exploit  the established weak convergence. By relying on \eqref{weak_limits_tau_West_1} and the equivalence of norms $\|\cdot\|_{L^2}$ and $\|\sqrt{\aaa} \cdot\|_{L^2}$, we conclude that  $\aaa(u) \utt^\tau \relbar\joinrel\rightharpoonup 	\aaa(u) \utt \text{ in } L^2(0,T; \Ltwo)$.
Next, it holds that
\begin{equation}
		\begin{aligned}
\intTO  (\aaa(u))_t\bar{u}_t v \dxt =&\, 2k_1 \intTO  u_t \bar{u}_t v \dxt \\
\lesssim&\,  \|\ut\|_{L^2(L^4)}\|\bar{u}_t\|_{L^\infty(L^2)}\|v\|_{L^2(L^4)},
	\end{aligned}
\end{equation}
and thus this term tends to zero as $\tau \searrow 0$ by the strong convergence in \eqref{weak_limits_tau_West_2}. We can furthermore  conclude that 
\begin{equation}
	\begin{aligned}
	\begin{multlined}[t]	\intTO  (\aaa(u)\bar{u}_{t})_t v \dxt-c^2 \intTO  \Delta \bar{u} v \dxt \\ \hspace*{4cm}+ \delta \intTO  \nabla \bar{u}_{t} \cdot \nabla v \dxt  \rightarrow  0 \quad \text{as } \tau \searrow 0.
		\end{multlined}
	\end{aligned}
\end{equation}
We also have
\begin{equation}
	\begin{aligned}
		&2k_1\intTO (\bar{u}\ut^\tau)_t v \dxt \\
	=&\,2k_1\intTO \bar{u}_t\ut^\tau v \dxt 	+ 2k_1\intTO \bar{u} \utt^\tau v \dxt \\
	 \lesssim&\,  \|\bar{u}_t\|_{L^\infty(L^2)} \|\ut^\tau\|_{L^2(L^4)} \|v\|_{L^2(L^4)}+\|\bar{u}\|_{L^\infty(L^4)} \|\utt^\tau\|_{L^2(L^2)} \|v\|_{L^2(L^4)}\rightarrow 0,	
	\end{aligned}
\end{equation}
thanks to \eqref{weak_limits_tau_West_2} and the embedding $H^1(\Omega) \hookrightarrow L^4(\Omega)$.  It remains to discuss the convolution term on the right-hand side of \eqref{weak_limit_baru}. By noting that
\[
\frakK*\uttt^\tau =( \frakK*\utt^\tau)_t - \frakK(t) u_2^\tau
\]
and since $v(T)=0$, integration by parts yields
\begin{equation}\label{conv_term}
\begin{aligned}
&-\intTO \taua \frakK*\uttt^\tau v \dxt\\
	=&\, \begin{multlined}[t] \taua  \left\{\intTO  \frakKone*\utt^\tau  v_t \dxt \right. \left.+ \inttO  \frakKone(s)u^\tau_2 v\dxt\right\}.
\end{multlined}
\end{aligned}
\end{equation}
The terms in the bracket  in the last line of \eqref{conv_term} are uniformly bounded:
\begin{equation}
\begin{aligned}
&\intTO  \left\{\frakKone*\utt^\tau  v_t +   \frakKone(s)u^\tau_2 v \right\}\dxt \\
\lesssim&\, \|\frakKone\|_{L^1(0,T)} \left\{\|\utt^\tau\|_{L^2(L^2)}\|v_t\|_{L^2(L^2)}+  \|u^\tau_2\|_{L^2(\Omega)}\|v\|_{L^1(L^2)}\right\},
\end{aligned}
\end{equation}
and so the convolution term \eqref{conv_term} also converges to zero as $\tau \searrow 0$.  Similarly,
\[
 \intTO  \taua c^2 \frakKone * \Delta u^\tau_t v \dxs  \rightarrow 0 \quad \text{as} \ \tau \searrow 0.
\]
Therefore, the right-hand side of \eqref{weak_limit_baru} tends to zero as $\tau \searrow 0$ and we conclude that $u$ weakly solves the limiting Westervelt equation. The initial conditions $(u_0, u_1)$ are obtained in the limit of $(u_0^\tau, u_1^\tau)$ by \eqref{limits_initial_tau_West}. \\ 
\indent The limiting problem with $\tau=0$ is  known to be well-posed with $f=0$; 
see~\cite[Theorem 1.1]{meyer2011optimal}. The uniqueness of solutions in a general setting can be obtained by testing the equation satisfied by the difference $\bar{u}$ of two solutions with, for example, $\bar{u}_{t}$.  Note that in the limiting problem one can use a bootstrap argument to show that $\ut \in L^2(0,T; \Honetwo)$. By a subsequence-subsequence argument and the uniqueness of solutions to the limiting problem, we conclude that the whole sequence converges to $u$ as $\tau \searrow 0$, thus arriving at the following result.
\begin{proposition}[Limiting weak behavior of equations with Westevelt nonlinearities] \label{Prop:WeakLim_West}
	Let $T>0$ and $\tau \in (0, \bar{\tau}]$. Let assumptions \ref{Aone}--\ref{Athree} on the kernel hold. Let
\[
	(u^\tau_0, u^\tau_1, u^\tau_2) \in  \left(\Honetwo \right) \times \Honezero \times  \Ltwo\]
and $f \in L^2(0,T; L^2(\Omega))$ with
\begin{equation}
	\|u^\tau_0\|^2_{H^2}+\|u^\tau_1\|^2_{H^1} + \bar{\tau}^a \|u^\tau_2\|^2_{L^2}+\|f\|^2_{L^2(L^2)} \leq r^2,
\end{equation} 
and $r>0$ chosen according to Theorem~\ref{Thm:WellP_West}, independently of $\tau$. Then the family $\{u^\tau\}_{\tau \in (0, \bar{\tau}]}$ of solutions to
	\begin{equation} \label{weak_GFE_tau}
	\left \{	\begin{aligned}
		&\begin{multlined}[t]	\taua \intTO  \frakKone * (\utt^\tau-c^2 \Delta u^\tau)_t v \dxt+\intTO (\aaa(u^\tau) u^\tau_{t})_t v\dxt
			\\-c^2 \intTO \Delta u^\tau v \dxt 
			+\delta \intTO\nabla \ut^\tau \cdot \nabla v \dxt\\= \intTO f  \dxt,\end{multlined} \\
	&	\text{for all }\ v \in L^2(0,T; \Honezero), \, \text{with} \ (u^\tau, u^\tau_t, u^\tau_{tt})\vert_{t=0} =(u^\tau_0, u^\tau_1, u^\tau_2)
	\end{aligned} \right. 
\end{equation}
converges in the sense of \eqref{weak_limits_tau_West_1}, \eqref{limits_initial_tau_West} to the solution \[ u \in \calUW \, \cap H^1(0,T; \Honetwo)\] of the Westervelt equation in pressure form: 
	\begin{equation} \label{weak_Westervelt}
	\left \{	\begin{aligned}
		&\begin{multlined}[t]	\intTO (\aaa(u) \ut)_t v\dxs
			-c^2 \intTO \Delta u v \dxs  
		-\delta \intTO\Delta \ut  v \dxs\\= \intTO f v \dxs,\end{multlined}\\
	&\text{for all}\ v \in L^2(0,T; \Ltwo), \ \text{with} \ (u, \ut)\vert_{t=0} =(u_0, u_1).
	\end{aligned} \right. 
\end{equation}
\end{proposition}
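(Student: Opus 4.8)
The plan is to obtain the limit purely from the $\tau$-uniform bounds furnished by Theorem~\ref{Thm:WellP_West}, extracting a convergent subsequence and then passing to the limit term-by-term in the weak formulation \eqref{weak_GFE_tau}. First I would record that, under the smallness hypothesis \eqref{smallness_r_Westervelt_tau}, the family $\{u^\tau\}_{\tau\in(0,\bar\tau]}$ is bounded in $\calUW$ independently of $\tau$; the Banach--Alaoglu theorem then yields a subsequence converging in the weak-$(*)$ senses \eqref{weak_limits_tau_West_1}. An application of the Aubin--Lions--Simon lemma upgrades this to the strong convergences \eqref{weak_limits_tau_West_2} in $C([0,T];\Honezero)$ and $C([0,T];\Ltwo)$, which in particular forces the convergence \eqref{limits_initial_tau_West} of the initial traces $(u^\tau_0,u^\tau_1)$ and identifies the limiting data $(u_0,u_1)$.

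The core of the argument is to show that testing \eqref{weak_GFE_tau} against a fixed $v\in C^\infty([0,T];C_0^\infty(\Omega))$ with $v(T)=0$ and letting $\tau\searrow0$ produces exactly the weak Westervelt formulation \eqref{weak_Westervelt}. Writing $\bar u=u-u^\tau$, the discrepancy between the Westervelt weak form evaluated at $u$ and the identity satisfied by $u^\tau$ collects the error terms displayed in \eqref{weak_limit_baru}, and the task is to show each tends to zero. For the linear damping, the Laplacian, and the term $(\aaa(u)\bar u_t)_t$ I would exploit the strong convergence of $u^\tau,\ut^\tau$ together with the weak convergence of $\utt^\tau$: e.g.\ $\aaa(u)\utt^\tau\relbar\joinrel\rightharpoonup\aaa(u)\utt$ in $L^2(\Ltwo)$, using the equivalence of $\|\cdot\|_{L^2}$ with $\|\sqrt{\aaa}\,\cdot\|_{L^2}$ afforded by the non-degeneracy \eqref{nondeg_aaa}, while cross terms such as $\intTO(\aaa(u))_t\bar u_t v\dxt$ are handled by H\"older together with $\Honezero\hookrightarrow L^4(\Omega)$, so that the strong decay of $\bar u_t$ in $C(\Ltwo)$ eliminates them. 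The Westervelt-specific term $2k_1\intTO(\bar u\,\ut^\tau)_t v\dxt$ splits into $\intTO\bar u_t\ut^\tau v\dxt$ and $\intTO\bar u\,\utt^\tau v\dxt$, both controlled via $H^1(\Omega)\hookrightarrow L^4(\Omega)$ and the strong convergence of $\bar u,\bar u_t$ set against the uniform bounds on $\ut^\tau,\utt^\tau$.

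The main obstacle is the nonlocal term $-\intTO\taua\,\frakKone*\uttt^\tau\,v\dxt$, since a priori only $\taua\frakKone*\uttt^\tau$ is controlled in $L^2(\Hneg)$ while $\uttt^\tau$ itself is not. The decisive move is to trade the third time-derivative for a factor of $\taua$: using the identity $\frakKone*\uttt^\tau=(\frakKone*\utt^\tau)_t-\frakKone(t)u_2^\tau$ and integrating by parts in time (legitimate because $v(T)=0$), I would rewrite the term, up to the prefactor, as $\taua\big\{\intTO\frakKone*\utt^\tau\,v_t\dxt+\inttO\frakKone(s)u_2^\tau v\dxt\big\}$, whose bracket is bounded by $\|\frakKone\|_{L^1(0,T)}$ times the uniformly bounded quantities $\|\utt^\tau\|_{L^2(L^2)}\|v_t\|_{L^2(L^2)}$ and $\|u_2^\tau\|_{L^2}\|v\|_{L^1(L^2)}$; the explicit factor $\taua$ then drives the whole expression to zero, and the companion term $\taua c^2\intTO\frakKone*\Delta\ut^\tau v\dxt$ is dispatched identically. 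Having passed to the limit, I would invoke the known well-posedness of the limiting Westervelt equation (\cite{meyer2011optimal}) to guarantee uniqueness of its solution, recover the extra regularity $\ut\in L^2(0,T;\Honetwo)$ by a bootstrap, and finally run a subsequence-of-subsequences argument so that, by uniqueness of the limit, the convergence of the extracted subsequence promotes to convergence of the entire family $\{u^\tau\}_{\tau\in(0,\bar\tau]}$.
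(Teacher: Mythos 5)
Your proposal is correct and follows essentially the same route as the paper's own argument: extraction of weak-$*$/weak limits from the $\tau$-uniform bounds of Theorem~\ref{Thm:WellP_West}, Aubin--Lions--Simon for the strong convergences \eqref{weak_limits_tau_West_2} and the initial traces, term-by-term passage to the limit in the error identity \eqref{weak_limit_baru}, the identity $\frakKone*\uttt^\tau=(\frakKone*\utt^\tau)_t-\frakKone(t)u_2^\tau$ plus integration by parts (using $v(T)=0$) to trade the third derivative for an explicit $\taua$ factor in the nonlocal term, and finally uniqueness of the limiting Westervelt problem with a subsequence-subsequence argument to upgrade to convergence of the whole family.
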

This limiting analysis in $\tau$ can also be seen as an alternative proof of solvability of the Westervelt equation in pressure form for (small) initial data in $\Honetwo \times \Honezero$ and source term in $L^2(0,T; L^2(\Omega))$; the assumptions coincide with the available well-posedness result in~\cite[Theorem 1.1]{meyer2011optimal}.
\subsection{Strong rate of convergence} \label{Subsec:Stronglim_tau_GFEIII} We next wish to prove that the family $\{u^\tau\}_{\tau \in (0, \bar{\tau}]}$ in fact converges strongly at a certain rate in a suitable norm. To simplify matters, we assume in this section that the first two initial conditions are independent of $\tau$; that is, $(u_0^\tau, u_1^\tau)=(u_1, u_2)$. We then note that the difference $\bar{u}=u-u^\tau \in \calUW$ weakly solves 
\begin{equation} \label{ibvp_diff_u}
	\begin{aligned}
(\aaa(u)\bar{u}_{t})_t-c^2\Delta \bar{u} 
			-\delta   \Delta \bar{u}_t=- 2k_1 (\bar{u}\ut^\tau)_t +	{\tau^a}  \frakKone * (\utt^\tau-c^2\Delta u^\tau)_t 
	\end{aligned}
\end{equation}
with homogeneous data. Having in mind the rate of convergence in the standard energy norm (that is, in the space $W^{1,\infty}(0,T; L^2(\Omega)) \cap L^\infty(0,T; \Honezero)$), we could try to test this difference equation with $\bar{u}_t.$ However, the issue arises with the convolution term 
\begin{equation}
	\begin{aligned}
		\inttO \taua \frakKone*\uttt^\tau \bar{u}_t \dxs
	\end{aligned}
\end{equation}
since we only have a uniform bound on $\taua \|\frakKone*\uttt^\tau\|_{L^2(H^{-1})}$ by the previous analysis and not $\|\frakKone*\uttt^\tau\|_{L^2(H^{-1})}$. Integration by parts in time would not help as it would introduce the term 
\revise{$\intO \taua (\frakK* \utt^\tau)(t) \bar{u}_t(t) \dx$ on the right-hand side. Although we could estimate it as follows:
	\[
	\intO \taua (\frakK* \utt^\tau)(t) \bar{u}_t(t) \dx \lesssim \tau^{2a} \|(\frakK*\utt^\tau)(t)\|^2_{L^2}+\|\bar{u}_t(t)\|^2_{L^2},
	\]
in general, we do not have access to a uniform bound on $\tau^\gamma \|\frakK*\utt^\tau\|^2_{L^\infty(L^2)}$ for some non-negative $\gamma <2a$.}
 Thus, although  strong convergence in the energy norm follows by \eqref{weak_limits_tau_West_2}, it does not seem feasible to arrive at a rate of convergence. \\
\indent We adapt here instead the ideas from~\cite{meliani2023} (where linear equations with generalized fractional derivatives of higher order are considered) to obtain strong rate of convergence in a weaker norm. To this end, for $t' \in (0,T)$, we use the following test function:
\begin{equation}\label{test_function}
\begin{aligned}
v(t)= \begin{cases}
\int_t^{t'} \bar{u}(s) \ds \quad &\text{if} \ 0 \leq t \leq t', \\[1mm]
0 \qquad &\text{if} \  t' \leq t \leq T,
\end{cases}
\end{aligned}
\end{equation}
where again $\bar{u}=u-u^\tau$. We refer to~\cite[Ch.\ 7.2]{evans2010partial} and~\cite{kaltenbacher2021determining} for similar ideas employed when proving uniqueness of solutions in the analysis of wave equations. We conveniently have $v_t= -\bar{u}$ if $ 0 \leq t \leq t'$, otherwise $v_t=0$. Further, given a Hilbert space $H$, the following bound holds:
\[
\|v\|_{L^\infty(0,T; H)} \leq \sqrt{T} \|\bar{u}\|_{L^2(0,T; H)}.
\]
Additionally, $v(t')=0$, which is particularly beneficial in the limiting analysis when treating the convolution term. Testing \eqref{ibvp_diff_u} with $v$ defined in \eqref{test_function}, integrating over $(0,T)$, and noting that $\bar{u}(0)=\bar{u}_t(0)=0$, yields
\begin{equation}
\begin{aligned}
&   \int_0^{t'} \intO \aaa(u) \bar{u}_t \bar{u} \dxs  -c^2 \int_0^{t'} \intO \nabla v_t \cdot \nabla v \dxs  +\delta \int_0^{t'}\intO |\nabla \bar{u} |^2 \dxs \\
=&\, \begin{multlined}[t]
	\int_0^{t'} \intO \left\{ -2k_1 (\bar{u}\ut^\tau)_t +\taua \frakKone * (\utt^\tau-c^2\Delta u^\tau)_t  \right\} v \dxs. 
	\end{multlined}
\end{aligned}
\end{equation}
Integration by parts in the first two terms on the left leads to
\begin{equation} \label{est_ubar}
	\begin{aligned}
		&   \frac12 \|\sqrt{\aaa}\bar{u}(t')\|^2_{L^2} + \frac{c^2}{2} \|\nabla v(0)\|^2_{L^2}+ \delta \int_0^{t'}\|\nabla \bar{u} (s)\|_{L^2}^2 \ds \\
		=&\, \begin{multlined}[t]
k_1 \int_0^{t'}\intO \ut \bar{u}^2\dxs-2k_1	\int_0^{t'} \intO (\bar{u}\ut^\tau)_tv \dxs\\\hspace*{5cm}+\taua 	\int_0^{t'} \intO \frakKone * (\utt^\tau-c^2\Delta u^\tau)_t   v \dxs. 
		\end{multlined}
	\end{aligned}
\end{equation}
We next wish to estimate the second and third term on the right-hand side (the first one will be taken care of by Gr\"onwall's inequality). Using integration by parts in time, we have for any $\varepsilon>0$:
\begin{equation}
	\begin{aligned}
		-2k_1\int_0^{t' }\intO   (\bar{u}\ut^\tau)_tv \dxs =&\, 2k_1\int_0^{t' }\intO \bar{u}  \ut^\tau  v_t\dxs \\
		=&\, -2k_1\int_0^{t' }\intO   \ut^\tau \bar{u}^2\dxs   \\
		\lesssim&\,  \|u_t^\tau\|_{L^\infty(L^4)}^2\|\bar{u}\|_{L^2(L^2)}^2 + \varepsilon \|\nabla \bar{u}\|^2_{L^2(L^2)}.
	\end{aligned}
\end{equation}
We recall that $\|u_t^\tau\|_{L^\infty(L^4)} \leq C$, uniformly in $\tau$.  Let us discuss the convolution term. Since the test function is zero at $t'$,  we have. after integration by parts
\begin{equation}
	\begin{aligned}
		 \int_0^{t' }\intO \taua \frakKone*\uttt^\tau v \dxs
			=&\, \, \begin{multlined}[t] \taua  \left\{ 
		\int_0^{t' }\intO  \frakKone*\utt^\tau  \bar{u}\dxs \right.  \left.-\int_0^{t' }\intO  \frakKone(s)u^\tau_2 v \dxs\right\},
	\end{multlined}
	\end{aligned}
\end{equation}
which can be further bounded as follows:
\begin{equation}
\begin{aligned}
	&\taua  \left\{ 
		\int_0^{t' }\intO  \frakKone*\utt^\tau  \bar{u}\dxs \right.  \left.- \int_0^{t' }\intO  \frakKone(s)u^\tau_2 v \dxs\right\}\\
	  \lesssim_T&\, \tau^{2a} \|\frakKone\|^2_{L^1}\|\utt^\tau\|^2_{L^2(L^2)}+\|\bar{u}\|^2_{L^2(L^2)}+ \tau^{2a}\|\frakKone\|^2_{L^1(0,T)} \|u^\tau_2\|_{L^2(\Omega)}^2 .
\end{aligned}
\end{equation}
Additionally,
\[
-\taua c^2	\int_0^{t'} \intO \frakKone * \Delta u^\tau_t   v \dxs 
\lesssim_T \tau^{2a} \|\frakKone\|_{L^1(0,T)}^2\|\nabla \ut^\tau\|_{L^2(L^2)}^2+\varepsilon \|\nabla \bar{u}\|_{L^2(L^2)}^2.
\]
We can use these bounds to further estimate the right-hand side terms in \eqref{est_ubar}. By choosing $\varepsilon>0$ to be sufficiently small, we can absorb the right-hand side $\varepsilon \|\nabla \bar{u}\|^2_{L^2(L^2)}$ terms by the $\delta$ term on the left side of \eqref{est_ubar},  and then employ Gr\"onwall's inequality. Together with the uniform bound 
\[
\|\utt^\tau\|^2_{L^2(L^2)}+\|\nabla \ut^\tau\|_{L^2(L^2)}^2 \leq C,
\]
guaranteed by Theorem~\ref{Thm:WellP_West}, we arrive at the following result.
\begin{theorem}[Limiting strong behavior of equations with Westevelt nonlinearities]\label{Thm:StrongLim_West}
Let the assumptions of Theorem~\ref{Thm:WellP_West} hold with $(u_0^\tau, u_1^\tau)=(u_1, u_2)$ independent of $\tau$. Let $\{u^\tau\}_{\tau \in (0, \bar{\tau}]}$ be the family of solutions to \eqref{weak_GFE_tau}
and let $u$ be the solution of the corresponding limiting problem for the Westervelt equation with $\tau=0$. Then there exists $C>0$, independent of $\tau$, such that
\begin{equation}
\|u-u^\tau\|_{L^\infty(L^2)}  +  \|\nabla (u-u^\tau)\|_{L^2(L^2)} \leq  C \taua.
\end{equation}
\end{theorem}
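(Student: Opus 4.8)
The plan is to derive an energy estimate for the difference $\bar{u}=u-u^\tau$, which solves \eqref{ibvp_diff_u} with homogeneous initial data, and to extract the prefactor $\taua$ from the memory contribution on the right-hand side. The natural energy test function $\bar{u}_t$ is unavailable here: as already noted, one only controls $\taua\|\frakKone*\uttt^\tau\|_{L^2(\Hnn)}$ uniformly in $\tau$, not the unscaled quantity, and integrating by parts in time would introduce $\taua(\frakKone*\utt^\tau)(t)\bar{u}_t(t)$, for which no suitable $\tau$-uniform $L^\infty(L^2)$ bound is at hand. I would therefore test \eqref{ibvp_diff_u} with the integrated function $v$ from \eqref{test_function}, which satisfies $v_t=-\bar{u}$ on $[0,t']$ and, crucially, $v(t')=0$.

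Testing over $(0,t')$ and using $v_t=-\bar{u}$, the second-order-in-space terms become coercive: integration by parts in the first two terms yields the identity \eqref{est_ubar}, whose left-hand side controls $\tfrac12\|\sqrt{\aaa}\bar{u}(t')\|_{L^2}^2$, $\tfrac{c^2}{2}\|\nabla v(0)\|_{L^2}^2$, and $\delta\int_0^{t'}\|\nabla\bar{u}\|_{L^2}^2\ds$. Together with the non-degeneracy \eqref{nondeg_aaa} of $\aaa$, the first term gives control of $\|\bar{u}(t')\|_{L^2}^2$ and the last of $\|\nabla\bar{u}\|_{L^2(L^2)}^2$, which are precisely the two norms in the claimed estimate.

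Next I would bound the three right-hand terms in \eqref{est_ubar}. The term $k_1\int_0^{t'}\intO\ut\bar{u}^2\dxs$ is of Gr\"onwall type, controlled by the regularity of the limit solution. For the nonlinear term, integration by parts in time together with $v_t=-\bar{u}$ turns $-2k_1\int(\bar{u}\ut^\tau)_t v$ into $-2k_1\int\ut^\tau\bar{u}^2$, dominated by $\|\ut^\tau\|_{L^\infty(L^4)}^2\|\bar{u}\|_{L^2(L^2)}^2+\varepsilon\|\nabla\bar{u}\|_{L^2(L^2)}^2$ with $\|\ut^\tau\|_{L^\infty(L^4)}\le C$ uniformly. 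The decisive step is the memory term: integrating by parts in time and exploiting $v(t')=0$ shifts one derivative off $\uttt^\tau$, giving
\[
\int_0^{t'}\intO\taua\frakKone*\uttt^\tau\, v\dxs
=\taua\Big\{\int_0^{t'}\intO\frakKone*\utt^\tau\,\bar{u}\dxs-\int_0^{t'}\intO\frakKone(s)u^\tau_2\, v\dxs\Big\},
\]
with the analogous manipulation applied to the $-\taua c^2\frakKone*\Delta\ut^\tau$ contribution. Because $\taua$ survives as a prefactor, Young's inequality produces $\tau^{2a}\|\frakKone\|_{L^1}^2$ times the uniformly bounded quantities $\|\utt^\tau\|_{L^2(L^2)}^2$, $\|\nabla\ut^\tau\|_{L^2(L^2)}^2$, $\|u^\tau_2\|_{L^2}^2$ of Theorem~\ref{Thm:WellP_West}, plus further $\|\bar{u}\|_{L^2(L^2)}^2$ and $\varepsilon\|\nabla\bar{u}\|_{L^2(L^2)}^2$ terms.

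Finally I would take $\varepsilon>0$ small enough to absorb the $\varepsilon\|\nabla\bar{u}\|_{L^2(L^2)}^2$ terms into the $\delta$ term on the left of \eqref{est_ubar}, apply Gr\"onwall's inequality to the remaining $\|\bar{u}\|_{L^2(L^2)}^2$ contributions, and invoke the uniform bound $\|\utt^\tau\|_{L^2(L^2)}^2+\|\nabla\ut^\tau\|_{L^2(L^2)}^2\le C$. This yields $\|\bar{u}(t')\|_{L^2}^2+\|\nabla\bar{u}\|_{L^2(0,t';L^2)}^2\lesssim_T\tau^{2a}$ uniformly in $t'\in(0,T)$; taking the supremum over $t'$ and then square roots gives the stated $\taua$ rate. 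The main obstacle is exactly this memory term, and the whole argument hinges on the choice \eqref{test_function} with $v(t')=0$, which is what lets the prefactor $\taua$ (and hence the order of convergence) be read off without needing the unavailable unscaled bound on $\frakKone*\uttt^\tau$.
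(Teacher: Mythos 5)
Your proposal is correct and follows essentially the same route as the paper's own argument: testing the difference equation \eqref{ibvp_diff_u} with the integrated test function \eqref{test_function}, deriving the identity \eqref{est_ubar}, handling the nonlinear term via $v_t=-\bar{u}$, extracting the $\taua$ prefactor from the memory terms by integration by parts using $v(t')=0$, and closing with absorption into the $\delta$ term plus Gr\"onwall's inequality and the uniform bounds of Theorem~\ref{Thm:WellP_West}. Nothing essential is missing.
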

\noindent This theorem reveals that the nonlocal equation 
\begin{equation}
\taua \frakKone *(\utt-c^2\Delta u)_t+ ((1+2k_1 u) \ut)_t-c^2\Delta u - \delta \Delta \ut  = f
\end{equation}
can be seen as an approximation of the strongly damped Westervelt equation for small enough $\tau$, under the assumptions on the kernel made in \ref{Aone}--\ref{Athree}. More precisely, solutions of the nonlocal problem converge to the solutions of the limiting problem with the order $a$ in the norm of the space $L^\infty(0,T; L^2(\Omega)) \cap L^2(0,T; \Honezero)$.
 \begin{remark}[On the convergence in the energy norm]
 \revise{	The main obstacle to obtaining convergence in the standard energy norm is the lack of a uniform bound on $\tau^\gamma \|\frakK*\utt\|^2_{L^\infty(L^2)}$ for any $0 \leq \gamma <2a$ with respect to the relaxation time.} \revise{Note that the situation significantly simplifies in the integer-order case with $\frakK=\delta_0$, where a uniform bound on $\tau \|\utt\|^2_{L^\infty(L^2)}$ can be deduced from the analysis in Section~\ref{Sec:Est_WestType}. This setting is already covered by the results of~\cite{bongarti2020vanishing}. \\
 	\indent In general,} an idea might be to uniformly bound $\|\utt\|_{L^\infty(L^2)}$. To this end, one could differentiate equation \eqref{GFE_type_eq} and perform an analogous testing procedure to before by testing it with $y_t=(\utt-c^2 \Delta u)_t$ (thereby paying the price of stronger regularity and smallness assumptions on the data). However, the issue is that we would need to ensure the boundedness of $\uttt(0)$ to write the leading term of the time-differentiated equation in the form of $\frakKone*(u_{ttt}-c^2 \Delta \ut)_t$ suitable for such testing. 
 \end{remark}
\section{Uniform estimates with Kuznetsov--Blackstock-type nonlinearities} \label{Sec:EstBK}  The ideas put forward in the previous sections can be extended to work for the Kuznetsov--Blackstock nonlinearites under stronger assumptions on data. Again, we first outline the key ideas before formalizing them. In this section, the linearized equation has the form
\begin{equation} \label{linear_eq_higher}
	\taua \frakKone *(\utt-c^2\Delta u)_t+ \aaa(x,t) \utt-c^2\bbb(x,t)\Delta u - \delta \Delta \ut = \calF(x,t)
\end{equation}
where we think of the coefficient $\aaa$ as a placeholder for $1+2k_1 \ut$, the coefficient $\bbb$ for $1-2k_2 \ut$, and the source term $\calF$ for $-2 k_3 \nabla u\cdot \nabla \ut +f$. 
To eventually treat Kuznetsov--Blackstock nonlinearities, we need more smoothness of the solution compared to before. Therefore, here we test the linearized equation (in a smooth semi-discrete setting) 
 with \[-\Delta y= -\Delta (\utt-c^2\Delta u).\]
  Let $\calF \in L^2(0,T; H^1(\Omega))$. 
 By using again the coercivity of the kernel in \ref{Athree}, this approach leads to the following estimate:
\begin{equation} \label{higher_testing_GFE}
	\begin{aligned}
		&\inttO	\nabla( \aaa \utt-c^2\bbb\Delta u - \delta \Delta \ut) \cdot \nabla (\utt-c^2\Delta u)\dxs\\
		\leq& \, \intt \| \calF(s) \|_{H^1} \|\utt(s)-c^2\Delta u(s)\|_{H^1}\ds+ \taua C_{\frakKone} \|\nabla \utt(0)-c^2 \nabla \Delta u(0)\|^2_{L^2}.
	\end{aligned}
\end{equation}
Above we have relied on the fact that $\utt=\Delta u= \Delta \ut=0$ on the boundary in the semi-discrete setting, provided the discretization is based on the smooth eigenfunctions of the Dirichlet--Laplacian operator. We have also used the trace theorem to treat the $\calF$ term a.e.\ in time:
\begin{equation}
\begin{aligned}
\left|-\intO \calF \Delta y \dxs \right| 
\leq&\, \|\nabla \calF\|_{L^2} \|\nabla y\|_{L^2}+\left\| \frac{\partial y}{\partial n}\right\|_{H^{-1/2}(\partial \Omega)}\|\calF\|_{H^{1/2}(\partial \Omega)} \\
\lesssim&\, \|\calF\|_{H^1}\|y\|_{H^1}.
\end{aligned}
\end{equation}
 Starting from \eqref{higher_testing_GFE} and transferring the $\bbb$ terms to the right side, we then further have
\begin{equation}
	\begin{aligned}
&\begin{multlined}[t]\int_0^t\|\sqrt{\aaa(s)} \nabla \utt(s)\|^2_{L^2} \ds+\frac{\delta c^2}{2}\|\nabla \Delta u(s)\|^2_{L^2}\Big \vert_0^t+\frac{\delta}{2}\|\Delta u_t(s)\|^2_{L^2} \Big \vert_0^t \end{multlined} \\
\leq&\, \begin{multlined}[t]-\inttO \utt \nabla \aaa \cdot \nabla \utt \dxs  +c^2 \inttO (\aaa \nabla  \utt+\utt \nabla \aaa) \cdot \nabla \Delta u \dxs\\\hspace*{-4mm}-c^4 \inttO\left( \bbb |\nabla \Delta u|^2+ \Delta u \nabla \bbb  \cdot \nabla \Delta u \right)\dxs\\+c^2 \inttO (\bbb \nabla \Delta u + \Delta u \nabla \bbb)\cdot \nabla \utt \dxs 
\\+\intt \| \calF(s) \|_{H^1} \|\utt(s)-c^2\Delta u(s)\|_{H^1}\ds 
+ \taua C_{\frakKone} \|\nabla \utt(0)-c^2 \nabla \Delta u(0)\|^2_{L^2}.\hspace*{10mm} \end{multlined}
	\end{aligned}
\end{equation}
Besides sufficient smoothness of the coefficients $\aaa$ and $\bbb$, and the non-degeneracy of $\aaa$ as before, to arrive at a uniform bound here we also need smallness of the coefficient $\aaa$; that is, we assume that
\begin{equation} \label{smallness_m}
	\begin{aligned}
		\|\nabla \aaa\|_{L^\infty(L^4)} \leq m.
	\end{aligned}
\end{equation}
Using also the embedding $\Honezero \hookrightarrow L^4(\Omega)$, if $m>0$ is small enough, the following term can be absorbed by the left-hand side:
\begin{equation}
	\begin{aligned}
	\inttO \utt \nabla \aaa \cdot \nabla \utt \dxs \lesssim \| \nabla \aaa\|_{L^\infty(L^4)} \| \nabla \utt\|_{L^2(L^2)}^2 \lesssim m \| \nabla \utt\|_{L^2(L^2)}^2.
	\end{aligned}
\end{equation}
Note that in the case of a linear equation with constant coefficients, condition \eqref{smallness_m} trivially holds. The other terms can be treated by first H\"older's inequality and then Young's and Gr\"onwall's inequalities. In this manner, we obtain
\begin{equation} \label{higher_order_est}
	\begin{aligned}
	&\begin{multlined}[t] \|\nabla \Delta u(t)\|^2_{L^2}+\|\Delta u_t(t)\|^2_{L^2}  + \int_0^t\| \nabla \utt\|^2_{L^2}\ds\end{multlined} \\
	\lesssim_T&\, \begin{multlined}[t] \|\nabla \Delta u_0\|^2_{L^2}+\|\Delta u_1\|^2_{L^2} +\taua \|\nabla u_2\|^2_{L^2} + \| \calF\|^2_{L^2(H^1)}. \end{multlined}
	\end{aligned}
\end{equation}
The  hidden constant has the form
\begin{equation}\label{C_lin_BK}
	C_{\textup{lin}}=C(\delta)\exp\left\{(1+ \|\bbb\|^2_{L^\infty(L^\infty)}+\|\nabla \bbb\|^2_{L^2(L^4)})T\right\}
\end{equation}
and tends to $\infty$ as $\delta \searrow 0$.  It is clear that the ($\tau$-independent) solution space should now be
	\begin{equation} \label{def_calUBK}
		\begin{aligned}
	\begin{multlined}[t] \calUBK = L^\infty(0,T; \Honethree)  \cap W^{1, \infty}(0,T; \Honetwo) \\ \cap H^2(0,T; \Honezero).
		\end{multlined}
	 \end{aligned}
\end{equation} 
We formalize the above arguments with the following well-posedness result. 
\begin{proposition}\label{Prop:Wellp_GFE_lin_higher}
	Let $T>0$, $\delta>0$, and $\tau \in (0, \bar{\tau}]$. Let  assumptions \ref{Aone}--\ref{Athree} on the kernel hold. Let the coefficients $\aaa$ and $\bbb$ satisfy 
\begin{equation}
\begin{aligned}
	&\aaa \in  L^\infty(0,T; L^\infty(\Omega) \cap W^{1,4}(\Omega)),\\
	&\bbb \in  L^\infty(0,T; L^\infty(\Omega)) \cap L^2(0,T; W^{1,4}(\Omega)).
\end{aligned}
\end{equation}
Assume that $\aaa$ does not degenerate so that \eqref{nondeg_aaa} holds. Let also \[ \calF \in L^2(0,T; H^1(\Omega))\] and
\[	(u_0, u_1, u_2) \in \Honethree \times \left( \Honetwo \right) \times \Honezero.\]
where
	\begin{equation}
	\Honethree = \{u \in  \Hthree: \, u_{\vert \partial \Omega}= \Delta u_{\vert \partial \Omega} =0\}.
\end{equation}
	Then there exists $m>0$, independent of $\tau$, such that if the coefficient $\aaa$ satisfies condition \eqref{smallness_m}, there is a unique solution 
	\[ u \in \calUBK, \quad \taua  \frakKone * (\utt-c^2 \Delta u)_t \in L^2(0,T; L^2(\Omega))\] of the following problem:
	\begin{equation} \label{IBVP_Westtype_lin_higher}
	\begin{aligned}
			\taua  \frakKone * (\utt-c^2 \Delta u)_t+\aaa(x,t) u_{tt}
			-c^2 \bbb(x,t) \Delta u -\delta \Delta \ut= \calF
	\end{aligned}
\end{equation} 
with $u\vert_{\partial \Omega}=0$ and $(u, u_t, u_{tt})\vert_{t=0} =(u_0, u_1, u_2)$. The solutions satisfies 
	\begin{equation}\label{final_est_GFE_lin}
		\begin{aligned}
			\|u\|_{\calUBK}^2 \lesssim_T \|u_0\|^2_{H^3}+\|u_1\|^2_{H^2} + \tau^a  \|u_2\|^2_{H^1}+ \|\calF\|_{L^2(H^1)}^2,
		\end{aligned}
	\end{equation}
	where the hidden constant has the form given in \eqref{C_lin_BK} and does not depend on $\tau.$ 
\end{proposition}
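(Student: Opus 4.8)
The plan is to follow the proof of Proposition~\ref{Prop:Wellp_Westlin} closely, carrying out a Faedo--Galerkin semi-discretization in space but now tailored to the higher-order test function $-\Delta y = -\Delta(\utt - c^2\Delta u)$ and to the stronger solution space $\calUBK$ in \eqref{def_calUBK}. As there, I present only the case $\frakKone \in L^1(0,T)$, the case $\frakKone=\delta_0$ being an immediate adaptation. The crucial structural choice is to build the finite-dimensional subspaces $V_n$ from the eigenfunctions of the Dirichlet--Laplacian: since each eigenfunction $\phi_k$ satisfies $\phi_k\vert_{\partial\Omega}=0$ and $\Delta\phi_k=-\lambda_k\phi_k$, one automatically obtains $\untt=\Delta\un=\Delta\unt=0$ on $\partial\Omega$ in the semi-discrete setting. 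This is exactly what legitimizes the Green's-identity manipulations behind the formal computation in \eqref{higher_testing_GFE}, where pairing $-\Delta y$ with the equation produces no uncontrolled boundary contributions beyond the source trace term, which is handled via the trace theorem as in the sketch. Existence and uniqueness of the approximate solution $\un\in W^{2,\infty}(0,T;V_n)$ then follows, exactly as before, from the theory of Volterra integral equations~\cite{gripenberg1990volterra}, and I would relegate these routine details to the appendix.

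First I would reproduce the a priori estimate already sketched at the start of this section at the discrete level. Testing the semi-discrete equation with $-\Delta y_n$, invoking the coercivity assumption \ref{Athree} on the nonlocal term (which contributes only $\taua C_{\frakKone}\|\nabla\untt(0)-c^2\nabla\Delta\un(0)\|^2_{L^2}$), and then applying H\"older's, Young's, and Gr\"onwall's inequalities, yields \eqref{higher_order_est} uniformly in both $n$ and $\tau$. The one genuinely new ingredient relative to the Westervelt case is the smallness condition \eqref{smallness_m}: the term $\inttO \untt\,\nabla\aaa\cdot\nabla\untt\dxs$ is bounded by $m\|\nabla\untt\|^2_{L^2(0,T;\Ltwo)}$ and, for $m$ small enough, absorbed into the left-hand side before Gr\"onwall is applied. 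This uniform bound furnishes a subsequence converging weakly-$*$ in $L^\infty(0,T;\Honethree)$, with $\unt$ converging weakly-$*$ in $L^\infty(0,T;\Honetwo)$ and $\untt$ converging weakly in $L^2(0,T;\Honezero)$.

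Next I would pass to the limit. Rearranging the equation as $\taua\frakKone*(\untt-c^2\Delta\un)_t=\calF-\aaa\untt+c^2\bbb\Delta\un+\delta\Delta\unt$ and noting that each term on the right is bounded in $L^2(0,T;\Ltwo)$ by the energy estimate, one sees that the leading term now lands in $L^2(0,T;\Ltwo)$ rather than in $L^2(0,T;\Hneg)$ as in Proposition~\ref{Prop:Wellp_Westlin}; a further bootstrap bounds $\taua\frakKone*\unttt$ in $L^2(0,T;\Hneg)$. Assumption \ref{Atwo} together with Lemma~\ref{Lemma:Caputo_seq_compact} then yields weak convergence of this nonlocal term, and one passes to the limit in the usual way to conclude that $u$ solves \eqref{IBVP_Westtype_lin_higher}. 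The first two initial conditions are recovered through the Aubin--Lions--Simon lemma applied to the weak limits, while the third, $\utt(0)=u_2$ in $\Honezero$, is obtained exactly as in Proposition~\ref{Prop:Wellp_Westlin}: one integrates the convolved leading term by parts via \eqref{diff_conv} against test functions vanishing at $t=T$ together with their first derivative, and uses that $\frakKone$ cannot vanish identically by \ref{Atwo}. Weak lower semicontinuity of the norms then delivers the stability bound \eqref{final_est_GFE_lin} with constant \eqref{C_lin_BK}.

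For uniqueness I would exploit a simplification that the higher regularity makes available. If $w$ denotes the difference of two solutions, then $w\in\calUBK$ forces $y:=w_{tt}-c^2\Delta w\in L^2(0,T;\Honezero)$ and, moreover, every term of the homogeneous equation lies in $L^2(0,T;\Ltwo)$, so that---unlike in the Westervelt setting, where the adjoint detour was unavoidable---one may test the homogeneous difference equation \emph{directly} with $y$. Since homogeneous data give $y(0)=w_{tt}(0)-c^2\Delta w(0)=0$, the coercivity bound \ref{Athree} eliminates the nonlocal contribution, and the remaining terms reproduce the energy estimate of Proposition~\ref{Prop:Wellp_Westlin}, forcing $w\equiv 0$. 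I expect the main obstacle to lie not in any single inequality but in reconciling the $-\Delta y$ testing with the boundary conditions: the scheme closes only because the Dirichlet--Laplacian basis propagates the homogeneous boundary data to $\Delta\un$ and $\Delta\unt$, and because the smallness of $m$ in \eqref{smallness_m} is precisely calibrated to absorb the $\nabla\aaa$ term that this higher-order testing generates.
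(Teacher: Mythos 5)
Your proposal is correct and follows essentially the same route as the paper: the paper's (very terse) proof likewise makes the pre-stated energy analysis with the test function $-\Delta(\utt-c^2\Delta u)$ rigorous via the Faedo--Galerkin procedure of Proposition~\ref{Prop:Wellp_Westlin}, bootstraps to get the $\tau$-uniform bound $\taua\|\frakKone*(\utt-c^2\Delta u)_t\|_{L^2(L^2)}\leq C$, and proves uniqueness exactly as you do, by testing the homogeneous problem directly with $y=\utt-c^2\Delta u$, which the higher regularity renders admissible.
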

\begin{proof}
The core of the arguments needed to prove the statement is contained in the above energy analysis leading up to \eqref{higher_order_est} which can be made rigorous through the Faedo--Galerkin procedure as before. We omit these details here. By boostrapping, we also obtain a $\tau$-uniform bound on $\taua	\|\frakKone * (\utt-c^2 \Delta u)_t\|_{L^2(L^2)} \leq C$.
 We note that the uniqueness of the constructed solution may be shown by testing the homogeneous problem by $y= \utt-\Delta u$, which is a valid test function in this setting.
\end{proof}
To connect this result to the nonlinear problem, we employ again the Banach fixed-point theorem, this time  to the mapping $\calT: \calB^{\textup{KB}} \ni u^* \mapsto u$, 
where $u$ solves \eqref{IBVP_Westtype_lin_higher} with
\begin{equation}
	\begin{aligned}
		\aaa(u^*_t) =&\,1+2k_1 u_t^*, \quad \bbb(u^*_t)  = 1-2k_2 u_t^*, \\
		\calF=& \,  - \calN(\nabla u^*, \nabla u^*_t)+ f= - 2k_3 \nabla u^*\cdot \nabla u_t^*+ f
	\end{aligned}
\end{equation}
and the same data, and  the previous fixed-point iterate $u^*$ is taken from the ball
\begin{equation} \label{ball_KB}
	\begin{aligned}
		\calB^{\textup{KB}} =\left \{ u \in \calUBK:\right.&\, \, \|u\|_{\calUBK} \leq R,\
		\left	(u, u_t, u_{tt})\vert_{t=0} =(u_0, u_1, u_2) \}. \right.
	\end{aligned}
\end{equation}
\vspace*{-6mm}
\begin{theorem}[Uniform well-posedness of equations with Kuznetsov--Blackstock nonlinearities]\label{Thm:WellP_GFE_BlckstockKuznetov} 
	Let $T>0$ and $\tau \in (0, \bar{\tau}]$. Let $c$, $\delta>0$ and $k_{1,2,3} \in \R$. Let assumptions \ref{Aone} and \ref{Atwo} on the kernel hold. Furthermore, let \[(u_0, u_1, u_2) \in  \Honethree \times \left( \Honetwo \right) \times  \Honezero\]
	and $f \in L^2(0,T; H^1(\Omega))$. There exists $r=r(T)>0$, independent of $\tau$, such that if
	\begin{equation} 
		\|u_0\|^2_{H^3}+\|u_1\|^2_{H^2} + \bar{\tau}^a  \|u_2\|^2_{H^1}+ \|f\|^2_{L^2(H^1)} \leq r^2,
	\end{equation}
	then there is a unique solution $u \in 	\calB^{\textup{KB}}$ of the nonlinear problem, given by
	\begin{equation} 
	\begin{aligned}
		&\begin{multlined}[t]	{\tau^a}  \frakKone * (\utt-c^2 \Delta u)_t+(1+2k_1 u_t) u_{tt}
			-c^2 (1-2k_2 u_t)\Delta u -\delta \Delta \ut\\ \hspace*{4.5cm}+ 2 k_3 \nabla u \cdot \nabla u_t=f  \end{multlined}
	\end{aligned} 
\end{equation}
with $u\vert_{\partial \Omega}=0$ and $(u, u_t, u_{tt})\vert_{t=0} =(u_0, u_1, u_2)$. The solutions satisfies 
\begin{equation}\label{final_est_GFE}
	\begin{aligned}
		\|u\|_{\calUBK}^2 \lesssim_T \|u_0\|^2_{H^3}+\|u_1\|^2_{H^2} + \tau^a  \|u_2\|^2_{H^1}+ \|f\|_{L^2(H^1)}^2. 
	\end{aligned}
\end{equation}
\end{theorem}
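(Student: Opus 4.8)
The plan is to deduce the statement from the linear well-posedness result in Proposition~\ref{Prop:Wellp_GFE_lin_higher} via Banach's fixed-point theorem applied to the mapping $\calT\colon \calB^{\textup{KB}} \ni u^* \mapsto u$, where $u$ solves the linearized equation \eqref{IBVP_Westtype_lin_higher} with the frozen coefficients and source
\[
\aaa(u^*_t)=1+2k_1 u^*_t,\quad \bbb(u^*_t)=1-2k_2 u^*_t,\quad \calF=-2k_3\,\nabla u^*\cdot\nabla u^*_t+f,
\]
and the prescribed data, over the ball $\calB^{\textup{KB}}$ from \eqref{ball_KB}. As in the Westervelt case, $\calB^{\textup{KB}}$ is nonempty (the solution of the linear problem with $\aaa=\bbb=1$, $\calF=f$ lies in it once $r$ is small relative to $R$) and closed in $\|\cdot\|_{\calUBK}$, so it suffices to verify the self-mapping and strict-contraction properties, selecting $R$ and then $r$ small, always uniformly in $\tau$.

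First I would check that, for $u^*\in\calB^{\textup{KB}}$, the frozen coefficients satisfy the hypotheses of Proposition~\ref{Prop:Wellp_GFE_lin_higher}. Since $u^*\in\calUBK$ gives $u^*_t\in L^\infty(0,T;\Honetwo)$ and $H^2(\Omega)\hookrightarrow L^\infty(\Omega)$, we have $\|2k_1 u^*_t\|_{L^\infty(L^\infty)}\lesssim|k_1|R$, so reducing $R$ secures the non-degeneracy \eqref{nondeg_aaa}; likewise $\|\nabla\aaa\|_{L^\infty(L^4)}=2|k_1|\|\nabla u^*_t\|_{L^\infty(L^4)}\lesssim|k_1|R$ gives the smallness condition \eqref{smallness_m} for $R$ small, while $\bbb$ obeys $\|\bbb\|_{L^\infty(L^\infty)}\lesssim 1+|k_2|R$ and $\|\nabla\bbb\|_{L^2(L^4)}\lesssim|k_2|R$, keeping the constant \eqref{C_lin_BK} bounded for $R$ in a bounded range. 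The quantitative input is the control of the source in the $L^2(0,T;H^1(\Omega))$ norm required by the proposition: using $\nabla u^*\in L^\infty(0,T;H^2)\hookrightarrow L^\infty(0,T;L^\infty)$ together with $H^1\hookrightarrow L^6$ and, on a bounded domain, $L^3\hookrightarrow L^2$, I would bound the spatial $H^1$ norm of $\nabla u^*\cdot\nabla u^*_t$ by splitting its gradient via H\"older to obtain $\|\nabla u^*\cdot\nabla u^*_t\|_{L^2(H^1)}\lesssim_T\|u^*\|_{L^\infty(H^3)}\|u^*_t\|_{L^2(H^2)}\lesssim_T R^2$. Feeding $\|\calF\|_{L^2(H^1)}^2\lesssim_T R^4+\|f\|_{L^2(H^1)}^2\lesssim_T R^4+r^2$ into the stability estimate \eqref{final_est_GFE_lin} yields $\|u\|_{\calUBK}^2\leq C(r^2+R^4)$; choosing first $R$ with $CR^4\leq\tfrac12 R^2$ and then $r$ with $Cr^2\leq\tfrac12 R^2$ gives $\|u\|_{\calUBK}\leq R$, i.e.\ $\calT$ maps $\calB^{\textup{KB}}$ into itself.

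For strict contractivity I would set $\bar{\phi}=\calT u^*-\calT v^*$ and $\bar{\phi}^*=u^*-v^*$ and subtract the two linearized equations. The difference $\bar{\phi}$ then solves \eqref{IBVP_Westtype_lin_higher} with leading coefficients $\aaa(u^*_t),\bbb(u^*_t)$, homogeneous data, and right-hand side
\[
-2k_1\,\bar{\phi}^*_t\,v_{tt}-2k_2 c^2\,\bar{\phi}^*_t\,\Delta v-2k_3\left(\nabla\bar{\phi}^*\cdot\nabla u^*_t+\nabla v^*\cdot\nabla\bar{\phi}^*_t\right),
\]
which again fits the linear framework. Applying \eqref{final_est_GFE_lin} and estimating each term in $L^2(H^1)$ by the same product estimates---placing the difference factor in $L^\infty(H^2)$ or $L^\infty(H^3)$ and the remaining factor (drawn from $v$, $u^*$, or $v^*$) in $L^2(H^1)$ or $L^2(H^2)$---leads to $\|\bar{\phi}\|_{\calUBK}\lesssim_T(|k_1|+|k_2|+|k_3|)\,R\,\|\bar{\phi}^*\|_{\calUBK}$. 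A final reduction of $R$ makes this constant strictly less than one, and Banach's fixed-point theorem delivers the unique solution $u\in\calB^{\textup{KB}}$ together with the bound \eqref{final_est_GFE}.

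The main obstacle I anticipate is precisely the $H^1$-in-space control of the quadratic gradient nonlinearity $\nabla u^*\cdot\nabla u^*_t$, and of the coefficient differences multiplying $v_{tt}$ and $\Delta v$ in the contraction step: differentiating such products in space produces terms like $(\nabla^2 u^*)\nabla u^*_t$ and $\nabla u^*(\nabla^2 u^*_t)$ that can only be closed in $L^2$ by using the full strength of the $\Honethree\times\Honetwo\times\Honezero$ regularity and the $C^{2,1}$ domain, which is exactly why one order of smoothness is gained relative to the Westervelt setting. A secondary point to handle carefully is that the non-degeneracy \eqref{nondeg_aaa}, the smallness \eqref{smallness_m}, and the contraction factor must all be met by a single choice of $R$ (and then $r$); since each scales linearly in $R$ and uniformly in $\tau$, this can be arranged.
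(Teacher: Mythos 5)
Your proposal is correct and follows essentially the same route as the paper: Banach's fixed-point theorem on the ball $\calB^{\textup{KB}}$ using the linear result of Proposition~\ref{Prop:Wellp_GFE_lin_higher}, with the smallness condition \eqref{smallness_m} and non-degeneracy secured by shrinking $R$, the self-mapping bound $\|u\|_{\calUBK}^2 \leq C(r^2+R^4)$ via the product estimate $\|\nabla u^*\cdot\nabla u^*_t\|_{L^2(H^1)}\lesssim \|u^*\|_{L^\infty(H^3)}\|u^*_t\|_{L^2(H^2)}$, and contraction from $\|\tilde{\calF}\|_{L^2(H^1)}\lesssim R\|\bar{\phi}^*\|_{\calUBK}$. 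The only (immaterial) difference is that you decompose the difference of the quadratic gradient terms asymmetrically as $\nabla\bar{\phi}^*\cdot\nabla u^*_t+\nabla v^*\cdot\nabla\bar{\phi}^*_t$, whereas the paper uses the symmetrized form $\tfrac{1}{2}\left(\nabla\bar{\phi}^*\cdot(\nabla u^*_t+\nabla v^*_t)+\nabla\bar{\phi}^*_t\cdot(\nabla u^*+\nabla v^*)\right)$; both yield the same contraction estimate.
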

\begin{proof}
The proof can be conducted similarly to the proof of Theorem~\ref{Thm:WellP_West} using the Banach fixed-point theorem; we only point out the main differences here. The smallness of the coefficient $\aaa$ can be guaranteed by observing that
\begin{equation}
\begin{aligned}
	\|\nabla \aaa\|_{L^\infty(L^4)} \leq 2|k_1|	\|\nabla u_t^*\|_{L^\infty(L^4)} 
	 \lesssim |k_1|	\|u_t^*\|_{L^\infty(H^2)}  \lesssim |k_1|	 R
\end{aligned}
\end{equation}
and taking $R$ (independently of $\tau$) small enough so that the right-hand side is smaller than $m$. The self-mapping property can be obtained similarly to before for small $r$ and $R$ by noting that
	\begin{equation}
		\begin{aligned}
		\|	\calF\|_{L^2(H^1)}
			\lesssim \begin{multlined}[t] \|u^*\|_{L^\infty(H^3)} \|  u_t^*\|_{L^2(H^2)}+ \|f\|_{L^2(H^1)}
			\end{multlined}
		\lesssim\, R^2+ \|f\|_{L^2(H^1)}
		\end{aligned}
	\end{equation}
and thus
$		\|u\|^2_{\calUBK} 
		\leq\,
		Ce^{(1+R^2)T}(r^2+ R^4)$.
To discuss contractivity, let $\calT u^{*} =u$ and $\calT v^*=v$. We denote their differences by $\bar{\phi}=u-v$ and $\bar{\phi}^*= u^*-v^*$. Then $\phi$ is a solution of the following equation:
\begin{equation}\label{diff_eq_contractivity_BK}
	\begin{aligned}
		{\tau^a}  \frakKone *( \bar{\phi}_{tt}-c^2\Delta \bar{\phi})_t+\aaa(u_t^*) \bar{\phi}_{t}
			-c^2   \bbb(u^*_t) \Delta \bar{\phi}
			-   \delta   \Delta \bar{\phi}_{t}= \mathcal{\tilde{F}}
	\end{aligned}
\end{equation}
with the right-hand side
\begin{equation}
	\begin{aligned}
\mathcal{\tilde{F}}
		=&\, -2k_1 \bar{\phi}_t^* v_{tt} -2k_2c^2 \bar{\phi}^*_t\Delta v +k_3 \nabla \bar{\phi}^* \cdot(\nabla u_t^*+\nabla v_t^*)+k_3 \nabla \bar{\phi}^*_t \cdot (\nabla u^*+\nabla v^*)
	\end{aligned}
\end{equation}
and homogeneous boundary and initial conditions. It is straightforward to check that 
\begin{equation}
	\begin{aligned}
		\| \mathcal{\tilde{F}}\|_{L^2(H^1)} \lesssim  R \|\bar{\phi}^*\|_{\calUBK}.
	\end{aligned}
\end{equation}
Thus employing energy estimate \eqref{final_est_GFE_lin} for the solution of \eqref{diff_eq_contractivity_BK} gives
\begin{equation}
	\begin{aligned}
	 \|\bar{\phi}\|_{\calUBK} \lesssim   e^{(1+ R^2)T} R \|\bar{\phi}^*\|_{\calUBK},
	\end{aligned}
\end{equation}
and we can obtain the strict contractivity of the mapping $\calT$ by additionally reducing the radius $R>0$. An application of Banach's fixed-point theorem yields the desired result.
\end{proof}
\section{Limiting behavior of equations with Kuznetsov--Blackstock-type nonlinearities} \label{Sec:Lim_BK}
It remains to discuss the limiting behavior of equations with Kuznetsov--Blackstock nonlinearities as $\tau \searrow 0$. Under the assumptions of Theorem~\ref{Thm:WellP_GFE_BlckstockKuznetov} with data uniformly bounded in $\tau$, such that
\[
 	\|u^\tau_0\|^2_{H^3}+\|u^\tau_1\|^2_{H^2} + \bar{\tau}^a  \|u^\tau_2\|^2_{H^1}+ \|f\|^2_{L^2(H^1)} \leq r^2,\]
we investigate in this section the vanishing thermal relaxation limit of the family $\{u^\tau\}_{\tau \in (0, \bar{\tau}]}$ of solutions to the following problem:
\begin{equation} \label{IBVP_GFE_tau_BlackstockKuznetsov}
\left \{	\begin{aligned}
&\begin{multlined}[t]	{\tau^a}  \frakKone * (\utt^\tau-c^2\Delta u^\tau)_t+\aaa(u^\tau_t) \utt^\tau
-c^2 \bbb(u^\tau_t) \Delta u^\tau 
-\delta   \Delta \ut^\tau\\ \hspace*{3.4cm}+\calN(\nabla u^\tau, \nabla u^\tau_t)=f \qquad \textup{ in }  \Omega \times (0,T),\end{multlined}\\
&u^\tau\vert_{\partial \Omega}=0, \\
&	(u^\tau, \ut^\tau, \utt^\tau)\vert_{t=0} =(u^\tau_0, u^\tau_1, u^\tau_2).
\end{aligned} \right. 
\end{equation}
\indent We can adapt the arguments from Section~\ref{Sec:LimWest}  to prove the weak convergence of this family to the solution of the Kuznetsov--Blackstock equation:
\begin{equation}\label{BK_eq}
	(1+2k_1 \ut) \utt
	-c^2 (1+2k_2 u_t) \Delta u
	-\delta   \Delta \ut+2k_3 \nabla u \cdot \nabla u_t=f
\end{equation}
as the relaxation time tends to zero. Indeed, by Theorem~\ref{Thm:WellP_GFE_BlckstockKuznetov}  and the obtained uniform bounds, we conclude that there is a subsequence, again not relabeled, such that
\begin{equation} \label{weak_limits_tau_GFE_BK}
\begin{alignedat}{4} 
u^\tau&\stackrel{\ast}{\relbar\joinrel\rightharpoonup} u &&\text{ in } &&L^\infty(0,T; \Honethree),  \\
\ut^\tau &\stackrel{\ast}{\relbar\joinrel\rightharpoonup}u_t && \text{ in } &&L^\infty(0,T; \Honetwo),\\
\utt^\tau &{\relbar\joinrel\rightharpoonup} u_{tt} && \text{ in } &&L^2(0,T; \Honezero),
\end{alignedat} 
\end{equation}
as $\tau \searrow 0$. Additionally, by the Aubin--Lions--Simon lemma, we have
\begin{equation}\label{weak_limits_tau_2_BK}
\begin{alignedat}{4} 
u^\tau&\longrightarrow u && \quad \text{ strongly}  &&\text{ in } &&C([0,T]; \Honetwo),  \\
\ut^\tau &\longrightarrow u_t && \quad \text{ strongly}  &&\text{ in } &&C([0,T]; \Honezero),
\end{alignedat} 
\end{equation} 
and thus the sequence of initial data converges in the following sense:
\begin{equation} \label{limits_tau_initial_BK} 
\begin{alignedat}{5} 
u_0^\tau=u^\tau(0)&\longrightarrow u(0)&&:=u_0 && \quad \text{ strongly}  &&\text{ in } && \Honetwo,  \\
u_1^\tau=\ut^\tau &\longrightarrow u_t(0)&&:=u_1 && \quad \text{ strongly}  &&\text{ in } && \Honezero
\end{alignedat} 
\end{equation} 
as $\tau \searrow 0$. It remains to prove that $u$ is a unique solution of the limiting problem. 
\begin{proposition}[Limiting weak behavior of equations with Kuznetsov--Blackstock nonlinearities] \label{Prop:WeakLim_BK} 
	Let the assumptions of Theorem~\ref{Thm:WellP_GFE_BlckstockKuznetov} hold.  Then the family $\{u^\tau\}_{\tau \in (0, \bar{\tau}]}$ of solutions to \eqref{IBVP_GFE_tau_BlackstockKuznetsov} converges weakly in the sense of \eqref{weak_limits_tau_GFE_BK}, \eqref{limits_tau_initial_BK} to the solution of the corresponding limiting problem \[u \in \calUBK \cap H^1(0,T; \Honethree)\] for the Kuznetsov--Blackstock equation \eqref{BK_eq} with homogeneous Dirichlet conditions and $(u, \ut)\vert_{t=0}=(u_0, u_1)$. 
\end{proposition}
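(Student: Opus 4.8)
The plan is to mirror the vanishing-relaxation argument of Section~\ref{Sec:LimWest}, now exploiting the stronger uniform bounds furnished by Theorem~\ref{Thm:WellP_GFE_BlckstockKuznetov}. The convergences in \eqref{weak_limits_tau_GFE_BK}--\eqref{limits_tau_initial_BK} are already in hand, so what remains is to (i) pass to the limit in the weak form of \eqref{IBVP_GFE_tau_BlackstockKuznetsov} to show that $u$ weakly solves \eqref{BK_eq}, and (ii) establish uniqueness of the limiting problem, which, via a subsequence--subsequence argument, upgrades the subsequential convergence to convergence of the whole family. I would fix a test function $v \in C^\infty([0,T]; C_0^\infty(\Omega))$ with $v(T)=0$, write the identity satisfied by $u^\tau$, and examine each term as $\tau \searrow 0$.

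For the nonlinear terms, the higher regularity is precisely what makes the limit passage essentially routine. From $u^\tau \to u$ strongly in $C([0,T]; \Honetwo)$ I obtain $\Delta u^\tau \to \Delta u$ and $\nabla u^\tau \to \nabla u$ strongly in $C([0,T]; \Ltwo)$, and from $\ut^\tau \to u_t$ in $C([0,T]; \Honezero)$ I obtain $\nabla \ut^\tau \to \nabla u_t$ strongly in $C([0,T]; \Ltwo)$. Hence the damping-type product $\bbb(\ut^\tau)\Delta u^\tau = (1-2k_2 \ut^\tau)\Delta u^\tau$ and the gradient nonlinearity $\nabla u^\tau \cdot \nabla \ut^\tau$ converge as products of strong convergences. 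The only genuinely weak-times-strong term is the leading one, $\aaa(\ut^\tau)\utt^\tau = (1+2k_1 \ut^\tau)\utt^\tau$: here I pair the strong limit $\ut^\tau \to u_t$ in $C([0,T]; \Honezero) \hookrightarrow C([0,T]; L^4(\Omega))$, which yields $\ut^\tau v \to u_t v$ strongly in $\LtwoLtwo$, against the weak limit $\utt^\tau \rightharpoonup u_{tt}$ in $\LtwoLtwo$, so that $\int_0^T\int_\Omega \aaa(\ut^\tau)\utt^\tau v \dxt \to \int_0^T\int_\Omega \aaa(u_t)u_{tt} v \dxt$.

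For the nonlocal contributions I would repeat the integration-by-parts trick of Section~\ref{Sec:LimWest}: using $\frakKone*\uttt^\tau = (\frakKone*\utt^\tau)_t - \frakKone(t)u_2^\tau$ and $v(T)=0$, the convolution term reduces to $\taua\{\int_0^T\int_\Omega \frakKone*\utt^\tau\, v_t \dxt + \int_0^T\int_\Omega \frakKone(s)u_2^\tau v \dxt\}$, which by Young's convolution inequality is bounded by $\taua \|\frakKone\|_{L^1(0,T)}(\|\utt^\tau\|_{\LtwoLtwo}\|v_t\|_{\LtwoLtwo} + \|u_2^\tau\|_{\Ltwo}\|v\|_{L^1(L^2)})$; since the bracketed factors are uniformly bounded by Theorem~\ref{Thm:WellP_GFE_BlckstockKuznetov}, the whole expression is $O(\taua)$ and vanishes, and the same reasoning disposes of $\taua c^2 \frakKone*\Delta \ut^\tau$. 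Collecting the terms shows that $u$ satisfies \eqref{BK_eq} weakly; the data $(u_0,u_1)$ are recovered from \eqref{limits_tau_initial_BK}, and a bootstrap argument analogous to the Westervelt case (reading the equation for $\Delta \ut$) supplies the extra regularity $\ut \in L^2(0,T;\Honethree)$, hence $u \in \calUBK \cap H^1(0,T;\Honethree)$.

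The main obstacle is the uniqueness of the limiting Kuznetsov--Blackstock equation, needed to conclude that the entire family, not merely a subsequence, converges. I would test the equation satisfied by the difference $\bar{u}=u^{(1)}-u^{(2)}$ of two solutions with the same data by $\bar{u}_t$, as in the Westervelt case. The delicate contributions are those generated by the $\ut$-dependence of the coefficients, namely the terms of the form $\int k_1 \utt^{(i)}\bar{u}_t^2$ coming from $\tfrac12\partial_t(\aaa(u_t^{(1)})\bar{u}_t^2)$ and from the coefficient difference $\aaa(u_t^{(1)})-\aaa(u_t^{(2)})=2k_1\bar{u}_t$; controlling these requires the bootstrap regularity $\utt^{(i)} \in L^2(0,T;\Honezero)\hookrightarrow L^2(0,T;L^6(\Omega))$ together with the $L^2(0,T;\Honezero)$ bound on $\bar{u}_t$ provided by the strong damping $-\delta\Delta\bar{u}_t$ and the smallness of the data, after which Gr\"onwall's inequality forces $\bar{u}=0$. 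Alternatively, one may invoke the established well-posedness theory for the Kuznetsov--Blackstock equation under these smoothness and smallness hypotheses. With uniqueness secured, the standard subsequence--subsequence argument yields convergence of $\{u^\tau\}_{\tau \in (0,\bar{\tau}]}$ as $\tau \searrow 0$, completing the proof.
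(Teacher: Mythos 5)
Your proposal is correct and follows essentially the same route as the paper's proof: passing to the limit in the weak form by pairing the weak limits \eqref{weak_limits_tau_GFE_BK} with the strong Aubin--Lions convergence \eqref{weak_limits_tau_2_BK} to handle the nonlinear terms, disposing of the convolution terms via the integration-by-parts trick with an $O(\tau^a)$ bound, bootstrapping to get $\ut \in L^2(0,T;\Honethree)$, and proving uniqueness of the limit problem by testing the difference equation with $\bar{u}_t$ so that a subsequence--subsequence argument upgrades to convergence of the whole family. The paper gives this argument only in outline (explicitly omitting the details), and your write-up fills in those details along precisely the intended lines.
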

\begin{proof}
The proof follows using analogous arguments to those in Section~\ref{Sec:LimWest} by proving that $u$ in \eqref{weak_limits_tau_GFE_BK} solves the limiting problem. The main difference compared to the similar analysis in Proposition~\ref{Prop:WeakLim_West}  comes from treating the nonlinear terms. These can be tackled by exploiting the weak limits in \eqref{weak_limits_tau_GFE_BK} together with the strong convergence in \eqref{weak_limits_tau_2_BK}.
We omit the details here.  In the limiting problem, one can use a bootstrap argument to show that additionally $\ut \in L^2(0,T; \Honethree)$. Uniqueness of solutions to the limiting problem can be shown by testing the equation satisfied by the difference $\bar{u}$ of two solutions with $\bar{u}_{t}$; similar ideas can be found, for example, in~\cite[Theorem 5.1]{kaltenbacher2022limiting}.  Thus by a subsequence-subsequence argument we conclude that the whole sequence $\{u^\tau\}_{\tau \in (0, \bar{\tau}]}$ converges to $u$ in the sense of \eqref{weak_limits_tau_GFE_BK}. 
\end{proof}
Proposition~\ref{Prop:WeakLim_BK} covers the case  $\frakKone=\delta_0$ and thus provides weak convergence of solutions of the third-order Jordan--Moore--Gibson--Thompson equation with Kuznetsov--Blackstock nonlinearities. 
This result generalizes~\cite[Theorem 7.1]{kaltenbacher2019jordan}, where Kuznetsov-type nonlinearities (that is, equations with $k_2=0$) have been considered under the same assumptions on the data.
\subsection{Strong rate of convergence with Blackstock-type nonlinearities} To discuss the strong convergence (and determine the rate), we see the difference $\bar{u}=u-u^\tau$ as the solution of
\begin{equation} \label{ibvp_diff_u_BK}
	\begin{aligned}
	\begin{multlined}[t]	\aaa(\ut)\bar{u}_{tt}-c^2 \bbb(\ut) \Delta \bar{u} 
		-\delta   \Delta \bar{u}_t=-2k_1 \bar{u}_t \utt^\tau-2 c^2 k_2 \bar{u}_t \Delta u^\tau-2k_3 \nabla \bar{u} \cdot \nabla u^\tau_t\\
		-2k_3\nabla u \cdot \nabla\bar{u}_t  +	{\tau^a}  \frakKone * (\utt^\tau-c^2\Delta u^\tau)_t. 
	\end{multlined}	
	\end{aligned}
\end{equation}
To simplify matters, we assume $(u_0^\tau, u_1^\tau)=(u_0, u_1)$ to be independent of $\tau$ in this section, so that $\bar{u}$ satisfies homogeneous initial conditions. Here obtaining strong convergence of solutions as $\tau \searrow 0$ with the order $a$ does not seem feasible using the procedure from before with Westervelt-type nonlinearities since, after testing with $v$ defined in \eqref{test_function}, we would have to further treat the term $-2k_1\inttO \bar{u}_t \utt^\tau v \dxs$
by integration by parts. 
 This would result in the third time derivative $\uttt^\tau$ which we cannot control. Testing with $\bar{u}_t$, on the other hand, would lead to the same issues related to the convolution term as in Section~\ref{Sec:LimWest}.\\
 \indent For this reason we restrict the discussion in this section to the Blackstock-type nonlinearities; that is, we assume that $k_1=0$ and thus $\aaa \equiv 1$ so that in the limit $\tau \searrow 0$ we obtain the Blackstock wave equation. The difference equation above then simplifies to
\begin{equation} \label{ibvp_diff_u_B}
\begin{aligned}
\begin{multlined}[t]	\bar{u}_{tt}-c^2 \bbb(\ut) \Delta \bar{u} 
-\delta   \Delta \bar{u}_t=-2 c^2 k_2 \bar{u}_t \Delta u^\tau
-2k_3 \nabla\bar{u}_t \cdot \nabla u-2k_3 \nabla u^\tau_t \cdot \nabla \bar{u} \\+	{\tau^a}  \frakKone * (\utt^\tau-c^2\Delta u^\tau)_t.
\end{multlined}	
\end{aligned}
\end{equation}
In this case ($k_1=0$), we can even test with $- \Delta v$; recall that $v$ is defined in \eqref{test_function}. Similarly to \eqref{est_ubar}, integrating over $\Omega$ and $(0,t')$ then leads to 
\begin{equation}\label{v_testing_Blackstock}
	\begin{aligned}
		&   \|\nabla \bar{u}(t')\|^2_{L^2}  + \frac{c^2}{2} \|\Delta v(0)\|^2_{L^2}  +\delta \int_0^{t'}\intO |\Delta \bar{u} |^2 \dxs \\
		=&\, \begin{multlined}[t] 2c^2 k_2 \int_0^{t'} \intO \ut \Delta \bar{u} \Delta v \dxs-	\taua\int_0^{t'} \intO  \frakKone * (\utt^\tau-c^2\Delta u^\tau)_t   \Delta v \dxs\\+ \int_0^{t'}\intO \left\{2 c^2 k_2 \bar{u}_t \Delta u^\tau
			+2k_3 \nabla\bar{u}_t \cdot \nabla u+2k_3 \nabla u^\tau_t \cdot \nabla \bar{u} \right\} \Delta v \dxs
			. 
		\end{multlined}
	\end{aligned}
\end{equation}
We can treat the convolution term analogously to before in Theorem~\ref{Thm:StrongLim_West} so we discuss the remaining terms. First, using H\"older's inequality and the embedding $H^2(\Omega) \hookrightarrow L^\infty(\Omega)$ yields
\begin{equation}
	\begin{aligned}
	2c^2 k_2 \int_0^{t'} \intO \ut \Delta \bar{u} \Delta v \dxs
	  \lesssim_T&\, \|\Delta \ut\|_{L^2(L^2)}   \|\Delta \bar{u}\|^2_{L^2(L^2)},
	\end{aligned}
\end{equation}
where we have also relied on the inequality $\|\Delta v\|_{L^\infty(L^2)} \leq \sqrt{T} \|\Delta \bar{u}\|_{L^2(L^2)}$.
Next, by using integration by parts in time and recalling that $v_t= - \bar{u}$ and $v(t')=0$, we find
\begin{equation}
	\begin{aligned}
	2c^2 k_2	\int_0^{t'} \intO  \bar{u}_t \Delta u^\tau \Delta v \dxs 
		 \lesssim&\,\left\{ \sqrt{T}\|\Delta \ut^\tau\|_{L^2(L^2)} +\|\Delta u^\tau\|_{L^\infty(L^2)} \right\}\|\Delta \bar{u}\|^2_{L^2(L^2)},
	\end{aligned}
\end{equation}
where we have also again relied on the embedding $H^2(\Omega) \hookrightarrow L^\infty(\Omega)$. 
We can treat the $k_3$ terms using again integration by parts in time, H\"older's inequality, and the embedding $H^1(\Omega) \hookrightarrow L^4(\Omega)$:
\begin{equation}
	\begin{aligned}
		&\int_0^{t'} \intO\left\{2k_3 \nabla u^\tau_t \cdot \nabla \bar{u}+	2k_3 \nabla\bar{u}_t \cdot \nabla u \right\} \Delta v \dxs \\
		\lesssim_T&\, \begin{multlined}[t] \left\{\|\Delta u_t^\tau\|_{L^2(L^2)}+\|\Delta u_t\|_{L^2(L^2)}
			 \right\}\|\nabla \bar{u}\|_{L^2(L^2)}^2+ \|\Delta u\|_{L^\infty(L^4)}^2\|\nabla \bar{u}\|^2_{L^2(L^2)}\\+\varepsilon \|\Delta \bar{u}\|^2_{L^2(L^2)}.
		\end{multlined}
	\end{aligned}
\end{equation}
To absorb the arising $ \|\Delta \bar{u}\|^2_{L^2(L^2)}$ terms by the $\delta$ term on the left-hand side of  \eqref{v_testing_Blackstock}, we choose small enough $\varepsilon>0$ and small enough $R>0$ in \eqref{ball_KB} so that
\begin{equation}
	\begin{aligned}
 \|\Delta \ut\|_{L^2(L^2)}+\|\Delta u^\tau_t\|_{L^2(L^2)}
+\|\Delta u^\tau\|_{L^\infty(L^2)} \lesssim  R,
	\end{aligned}
\end{equation}
and thus the terms on the left above can be made small relative to $\delta$, independently of $\tau$. The remaining terms can be treated using Gr\"onwall's inequality, to arrive at the following result.  
\begin{theorem}[Limiting strong behavior of equations with Blackstock-type nonlinearities] \label{Thm:StrongLim_Blackstock}
	Let the assumptions of Theorem~\ref{Thm:WellP_GFE_BlckstockKuznetov} hold for \eqref{IBVP_GFE_tau_BlackstockKuznetsov} with \[(u_0^\tau, u_1^\tau)=(u_0, u_1)\] independent of $\tau$. Let $\{u^\tau\}_{\tau \in (0, \bar{\tau}]} \subset \calB^{\textup{KB}}$ be the family of solutions to \eqref{IBVP_GFE_tau_BlackstockKuznetsov} with $k_1=0$
	and let $u$ be the solution of the corresponding limiting initial boundary-value problem for the Blackstock equation:
\begin{equation} \label{Blackstock}
	\begin{aligned}
		&	\utt
			-c^2(1-2k_2 \ut) \Delta u
			-\delta   \Delta \ut+2 k_3 \nabla u \cdot \nabla \ut= f
	\end{aligned} 
\end{equation}
with $u\vert_{\partial \Omega}=0$ and
$	(u, u_t)\vert_{t=0} =(u_0, u_1).$ 	\begin{equation}
\|\nabla(u-u^\tau)\|_{L^\infty(L^2)}  +  \left\{\int_0^{T}\|\Delta (u-u^\tau)(s)\|_{L^2}^2\ds\right\}^{1/2} \leq  C \taua,
\end{equation}
where the constant $C>0$ does not depend on $\tau$.
\end{theorem}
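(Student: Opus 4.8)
The plan is to analyze the difference $\bar{u}=u-u^\tau$, which by construction carries homogeneous initial data and, since $k_1=0$ forces $\aaa\equiv 1$, solves the difference equation \eqref{ibvp_diff_u_B} with the nonlocal effects concentrated in the single remainder $\taua\frakKone*(\utt^\tau-c^2\Delta u^\tau)_t$ on the right-hand side. The vanishing of $k_1$ is exactly what makes a higher-order test admissible: instead of pairing the equation with $\bar{u}_t$ (which would reproduce the convolution obstruction of Section~\ref{Sec:LimWest}) or with $v$ alone (as in the Westervelt case), I would test with $-\Delta v$, where $v$ is the backward-integrated function from \eqref{test_function} satisfying $v_t=-\bar{u}$ on $[0,t']$ and $v(t')=0$. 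Integrating over $\Omega\times(0,t')$ and integrating by parts in space and time, exactly as in the derivation leading to \eqref{est_ubar}, yields identity \eqref{v_testing_Blackstock}, whose left-hand side controls $\|\nabla\bar{u}(t')\|_{L^2}^2$, the term $\tfrac{c^2}{2}\|\Delta v(0)\|_{L^2}^2$, and the dissipation $\delta\int_0^{t'}\|\Delta\bar{u}\|_{L^2}^2\ds$; here the constant part of $\bbb$ produces the $\Delta v(0)$ contribution while its variable part is moved to the right.

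The remaining terms on the right-hand side of \eqref{v_testing_Blackstock} are to be absorbed into the dissipation and a Gr\"onwall-controllable factor $\|\nabla\bar{u}\|_{L^2(L^2)}^2$, up to a residual of size $\tau^{2a}$. The variable-coefficient term $2c^2k_2\int_0^{t'}\int_\Omega\ut\,\Delta\bar{u}\,\Delta v\dxs$ I would bound using $H^2(\Omega)\hookrightarrow L^\infty(\Omega)$ and the elementary inequality $\|\Delta v\|_{L^\infty(L^2)}\leq\sqrt{T}\|\Delta\bar{u}\|_{L^2(L^2)}$, which gives a factor $\|\Delta\ut\|_{L^2(L^2)}\|\Delta\bar{u}\|_{L^2(L^2)}^2$. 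For the coupling terms $2c^2k_2\bar{u}_t\Delta u^\tau$ and the two $k_3$ gradient products, the key manoeuvre is to integrate by parts in time, exploiting $v_t=-\bar{u}$ and $v(t')=0$ to trade the time derivative off $\bar{u}_t$ for a factor $\bar{u}$; combined with $H^2(\Omega)\hookrightarrow L^\infty(\Omega)$ and $H^1(\Omega)\hookrightarrow L^4(\Omega)$ this produces coefficients $\{\|\Delta\ut^\tau\|_{L^2(L^2)}+\|\Delta\ut\|_{L^2(L^2)}+\|\Delta u^\tau\|_{L^\infty(L^2)}+\|\Delta u\|_{L^\infty(L^4)}^2\}$ multiplying $\|\nabla\bar{u}\|_{L^2(L^2)}^2$, together with an absorbable remainder $\varepsilon\|\Delta\bar{u}\|_{L^2(L^2)}^2$.

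The convolution remainder is the genuine obstacle and the reason the argument is restricted to $k_1=0$. Only $\taua\|\frakKone*(\utt^\tau-c^2\Delta u^\tau)_t\|_{L^2(L^2)}$ is available uniformly in $\tau$ from Theorem~\ref{Thm:WellP_GFE_BlckstockKuznetov}, so I would treat it exactly as in Theorem~\ref{Thm:StrongLim_West}: write $\frakKone*\uttt^\tau=(\frakKone*\utt^\tau)_t-\frakKone(t)u_2^\tau$, integrate by parts in time, and discard the boundary term at $t'$ using $v(t')=0$, so that Young's inequality bounds the contribution by $\tau^{2a}\|\frakKone\|_{L^1}^2(\|\utt^\tau\|_{L^2(L^2)}^2+\|u_2^\tau\|_{L^2}^2)$ plus an absorbable $\varepsilon\|\Delta\bar{u}\|_{L^2(L^2)}^2$, with the $\Delta u_t^\tau$ piece handled analogously. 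Had $k_1$ been nonzero, the source would contain $-2k_1\bar{u}_t\utt^\tau$, and the same integration by parts in time would force a third derivative $\uttt^\tau$ that cannot be controlled uniformly in $\tau$. To close, I would invoke the uniform bounds of Theorem~\ref{Thm:WellP_GFE_BlckstockKuznetov}, which give $\|\Delta\ut\|_{L^2(L^2)}+\|\Delta u_t^\tau\|_{L^2(L^2)}+\|\Delta u^\tau\|_{L^\infty(L^2)}\lesssim R$, shrink $R$ so these coefficients are small relative to $\delta$, pick $\varepsilon>0$ small to absorb all $\|\Delta\bar{u}\|_{L^2(L^2)}^2$ contributions into the dissipation, and apply Gr\"onwall's inequality to the surviving $\|\nabla\bar{u}\|_{L^2(L^2)}^2$. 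Taking the supremum over $t'\in(0,T)$ then yields $\|\nabla\bar{u}\|_{L^\infty(L^2)}+\{\int_0^T\|\Delta\bar{u}\|_{L^2}^2\ds\}^{1/2}\lesssim\taua$, which is the claimed estimate.
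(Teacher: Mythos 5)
Your proposal is correct and follows essentially the same route as the paper's own proof: testing the difference equation \eqref{ibvp_diff_u_B} with $-\Delta v$ for $v$ as in \eqref{test_function}, estimating the $k_2$ and $k_3$ couplings via integration by parts in time together with the embeddings $H^2(\Omega)\hookrightarrow L^\infty(\Omega)$ and $H^1(\Omega)\hookrightarrow L^4(\Omega)$, treating the convolution remainder exactly as in Theorem~\ref{Thm:StrongLim_West}, and closing by shrinking $R$ and $\varepsilon$ to absorb the $\|\Delta\bar{u}\|_{L^2(L^2)}^2$ contributions into the $\delta$-dissipation before applying Gr\"onwall's inequality. You also correctly identify why the argument is confined to $k_1=0$, namely that the term $-2k_1\bar{u}_t\utt^\tau$ would generate an uncontrollable $\uttt^\tau$ after the time integration by parts, which matches the paper's own discussion.
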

Thus, provided the kernel and data satisfy the assumptions of Theorem~\ref{Thm:StrongLim_Blackstock}, this result establishes equation
\begin{equation}
{\tau^a}  \frakKone * (\utt^\tau-c^2\Delta u^\tau)_t+ \utt^\tau
-c^2 (1-2k_2u^\tau_t) \Delta u^\tau 
-\delta   \Delta \ut^\tau+2k_3 \nabla u^\tau \cdot \nabla u^\tau_t=f 
\end{equation}
as an approximation of the Blackstock equation (and vice versa) for small enough $\tau>0$, as well as the error one makes when exchanging them.
\section*{Acknowledgments}
The author is thankful to Barbara Kaltenbacher (University of Klagenfurt) and Mostafa Meliani (Radboud University) for many interesting discussions on fractional wave equations and for the helpful comments on an earlier version of this manuscript.
\renewcommand\appendixname{Appendix}
\begin{appendices}
	\section{Unique solvability of the semi-discrete problem} \label{Appendix:Galerkin}
We present in this appendix the proof of unique solvability of the semi-discrete problem discussed in Proposition~\ref{Prop:Wellp_Westlin}. Let $\{\phi_i\}_{i\geq 1}$ be the eigenfunctions of the Dirichlet--Laplace operator:
\begin{equation}
	- \Delta \phi_i = \lambda_i \phi_i \quad \text{in } \Omega, \qquad \phi_i=0 \quad \text{on} \ \partial \Omega,
\end{equation}
and let $V_n=\text{span}\{\phi_1, \ldots, \phi_n\}$. The approximate solution is sought in the form of
	\begin{equation}
		\begin{aligned}
			\un(t) = \sum_{i=1}^n \xin (t) \phi_i.
		\end{aligned}	
	\end{equation}
We choose approximate initial data as
	\begin{equation}
		\un_0=  \sum_{i=1}^n \xi_i^{(0, n)}(t) \phi_i,\quad \un_1= \sum_{i=1}^n \xi_i^{(1, n)}(t) \phi_i, \quad  \un_2 \sum_{i=1}^n \xi_i^{(2, n)}(t) \phi_i \in V_n,
	\end{equation}
	such that we have convergence in the following sense:
	\begin{equation} \label{convergence_approx_initial_data}
		\begin{aligned}
			&\un_0 \rightarrow u_0 \  \text{in} \ \Honetwo, \quad  \un_1 \rightarrow u_1 \ \text{in} \ \Honezero, \quad \text{and } \\
			&\un_2 \rightarrow u_2 \  \text{in} \ \Ltwo, 
		\end{aligned}
	\end{equation}
as $n \rightarrow \infty$. For each $n \in \N$, the system of Galerkin equations is then given by
	\begin{equation}
		\begin{aligned}
			\begin{multlined}[t]\taua	\sum_{i=1}^n (\frakKone * \xittt)(t) (\phi_i, \phi_j)_{L^2}+ \sum_{i=1}^n \xitt (\aaa(t)\phi_i, \phi_j)_{L^2}+ \sum_{i=1}^n \xit (\aaa_t(t)\phi_i, \phi_j)_{L^2}\\
				+c^2  \sum_{i=1}^n \xin ( \Delta \phi_i, \phi_j)_{L^2} 
				+\tau^a c^2 \sum_{i=1}^n (\frakKone * \xit)(t) (\nabla \phi_i, \nabla \phi_j)_{L^2} \\+ \delta \sum_{i=1}^n \xit(t) (\nabla \phi_i, \nabla \phi_j)_{L^2} 
				= (f(t), \phi_j)_{L^2}
			\end{multlined}	
		\end{aligned}		
	\end{equation}
	for a.e.\ $t \in (0,T)$ and all $j \in \{1, \ldots, n\}$. With $\bxi = [\xin_1 \ \ldots \  \xin_n]^T$, we can write this system in matrix form 
	\begin{equation}
		\left \{	\begin{aligned}
			& \taua M	\frakKone * \bxittt + M_\aaa \bxitt+ K \bxi+\tau^a c^2 K  \frakKone* \bxit +  (\delta K+M_{\aaa_t}) \bxit = \boldsymbol{f}, \\[1mm]
			& (\bxin, \bxit, \bxitt)\vert_{t=0} = (\boldsymbol{\xi_0}, \boldsymbol{\xi_1}, \boldsymbol{\xi_2}),
		\end{aligned} \right.
	\end{equation}
	where \[(\boldsymbol{\xi_0}, \boldsymbol{\xi_1}, \boldsymbol{\xi_2})=([\xi_1^{(0,n)} \, \ldots \, \xi_n^{(0, n)}]^T, [\xi_1^{(1,n)} \, \ldots \, \xi_n^{(1, n)}]^T, [\xi_1^{(2,n)} \, \ldots \, \xi_n^{(2, n)}]^T).\] Here $M$ and $K$ denote the standard mass and stiffness matrices, respectively, and $M_\aaa$ and $M_{\aaa_t}$ are the mass matrix with $\aaa$- and $\aaa_t$-weighted entries, respectively:
	\[
	M_{\aaa, ij}(t)=(\aaa(t) \phi_i, \phi_j)_{L^2}, \qquad M_{\aaa_t, ij}(t)=(\aaa_t(t) \phi_i, \phi_j)_{L^2}.
	\] Let the new unknown be $\bmu= \frakKone*\bxittt$. Then
	\begin{equation}
		\begin{aligned}
			&\bxitt=\, \tfrakK* \bmu + \boldsymbol{\xi_2}, \\
			&\bxit= 1*\tfrakK*\bmu+ \boldsymbol{\xi_2}t+\boldsymbol{\xi_1}, \\
			&\bxi=\,1*1*\tfrakK*\bmu+\boldsymbol{\xi_2} \frac{t^2}{2}+\boldsymbol{\xi_1}t +\boldsymbol{\xi_0}.
		\end{aligned}
	\end{equation}
	The system can then be equivalently rewritten as a system of Volterra equations:
	\begin{equation}
		\begin{aligned}
			\begin{multlined}[t] \taua	\bmu+ M^{-1} M_\aaa \tilde{\frakK}* \bmu + M^{-1}K 1*1*\tfrakK*\bmu
				+\tau^a c^2 M^{-1}K  \frakK* 1*\tfrakK*\bmu\\+   M^{-1}(\delta K+M_{\aaa_t}) 1 *\tfrakK* \bmu  =  \boldsymbol{\tilde{f}}
			\end{multlined}
		\end{aligned}
	\end{equation}
	with the right-hand side
	\begin{equation}
		\begin{aligned}
			\boldsymbol{\tilde{f}}=&\, 	\begin{multlined}[t]  M^{-1}\boldsymbol{f}-M^{-1} M_\aaa \boldsymbol{\xi_2}-M^{-1}K(\boldsymbol{\xi_2} \frac{t^2}{2}+\boldsymbol{\xi_1}t +\boldsymbol{\xi_0})\\-\tau^a c^2 M^{-1}K  \frakK*(\boldsymbol{\xi_2}t+\boldsymbol{\xi_1}) -  M^{-1}(\delta K +M_{\aaa_t})(\boldsymbol{\xi_2}t+\boldsymbol{\xi_1} )	\end{multlined}
		\end{aligned}
	\end{equation}
that belongs to $L^2(0,T)$.	By~\cite[Theorem 3.5, Ch.\ 2 ]{gripenberg1990volterra}, the system has a unique solution $\bfmu \in L^\infty(0,T)$. Next we consider
	\begin{equation}
		\left \{	\begin{aligned}
			&	\frakK*\bxittt =\bmu, \\
			& (\bfxi, \bxit, \bxitt)\vert_{t=0} =(\bxi_0, \bxi_1, \bxi_2).
		\end{aligned} \right.
	\end{equation}
	We can rewrite this problem equivalently as
	\begin{equation}
		\left \{	\begin{aligned}
			&	\bxitt = \tfrakK* \bmu +\bxi_2 \in L^\infty(0,T), \\
			& (\bfxi, \bxit)\vert_{t=0} =(\bxi_0, \bxi_1),
		\end{aligned} \right.
	\end{equation}
	which has a unique solution $\bxi \in W^{2, \infty}(0,T)$. In turn we obtain existence of a unique approximate solution $\un \in W^{2, \infty}(0,T; V_n)$. \\[2mm]
\end{appendices}
\bibliography{references}{}
\bibliographystyle{siam} 
\end{document}